\DeclareSIUnit{\week}{weeks}
\def\R{\mathbb{R}}
\def\C{\mathbb{C}}
\def\cB{\mathcal{B}}
\def\cC{\mathcal{C}}
\def\cR{\mathcal{R}}
\def\cS{\mathcal{S}}
\def\cM{\mathcal{M}}
\DeclareMathOperator{\diver}{div}
\def\sym{\nabla^{\mathrm{s}}}
\def\stiffness{\C}
\def\stiffnesshom{\stiffness^{\mathrm{hom}}}
\def\stiffnessmicro{\stiffness^{\mathrm{micro}}}
\def\strain{\varepsilon}
\NewDocumentCommand \qforall {s}
{
\IfBooleanTF {#1}
{\ensuremath{\quad \text{for all} \, }}
{\qq{for all}}
}
\DeclarePairedDelimiterX\setc[2]{\{}{\}}{\,#1 \;\delimsize\vert\; #2\,}
\newcommand{\cost}{c_{\textrm{ost}}}
\newcommand{\cpro}{c_{\textrm{pro}}}
\newcommand{\kdiffpro}{k_{\textrm{pro}}^{\textrm{diff}}}
\newcommand{\mufi}{\mu_{\textrm{fi}}}
\newcommand{\muost}{\mu_{\textrm{ost}}}
\newcommand{\mucho}{\mu_{\textrm{cho}}}
\newcommand{\oct}{\gamma_{\textrm{oct}}}
\newcommand{\weakc}{\rightharpoonup}
\newcommand{\GD}{\mathrm{D}}
\newcommand{\scaffold}{\rho}
\newcommand{\bone}{\mathrm{b}}
\newcommand{\uhom}{u^\mathrm{hom}}
\newcommand{\macstrain}{\sym \uhom}
\newcommand{\micstrain}{\sym u_\varepsilon}
\newcommand{\diff}{D}
\newcommand{\diffhom}{\diff^\mathrm{hom}}
\newcommand{\diffmic}{\diff^\mathrm{micro}}
\newcommand{\migspeed}{k_\mathrm{mig}}
\title{Micro-Macro Coupling for Optimizing Scaffold Mediated Bone Regeneration}
\author{Patrick Dondl\thanks{Department of Applied Mathematics, University of Freiburg, Freiburg, Germany.}
\and Oliver Suchan\footnotemark[1]
  }
\date{}
\theoremstyle{plain}
\newtheorem{theorem}{Theorem}[section]
\newtheorem{lemma}[theorem]{Lemma}
\theoremstyle{remark}
\newtheorem{remark}[theorem]{Remark}
\begin{document}

\maketitle

\begin{abstract} 
This work presents a framework for modeling three-dimensional scaffold-mediated bone regeneration and the associated optimization problem. By incorporating microstructure into the model through periodic homogenization, we capture the effects of microscale fluctuations on the bone growth process. Numerical results and optimized scaffold designs that explicitly account for the microstructure are presented, demonstrating the potential of this approach for improving scaffold performance. 
\end{abstract}

\section{Introduction}

In this work, we consider mathematical models for the regeneration of critical size bone aidded by implantation of bioresorbable porous scaffolds at the defect site. The model presented here is based on the experimentally validated framework proposed in \cite{MahdiJaber}. However, a key limitation of \cite{MahdiJaber} is its inability to accommodate a mathematical optimization framework. Such optimization was previously considered in \cite{ScaffoldOpt}, albeit with ad-hoc homogenized quantities, thus not explicitly incorporating the microstructural properties of bone and scaffold architecture.

To address these limitations, we employ a periodic homogenization approach that integrates microscale effects into the model. Specifically, we use a two-scale finite element method (FE$^2$) to model the reaction-diffusion processes and a finite element-fast Fourier transform (FE-FFT) method to evaluate the mechanical stimulation of cells via linear elasticity. Our primary focus is on the computational aspects of this micro-macro coupling approach. The rigorous mathematical analysis remains an area for future exploration.

The proposed framework is versatile and can accommodate various microstructures. In this work, we focus on Gyroid-based and strut-like architectures, which are particularly well-suited for scaffolding applications in bone regeneration. Cell responses to stimuli are modeled using coupled anisotropic, heterogeneous, and nonlinear reaction-diffusion equations, supplemented by logistic growth ordinary differential equations (ODEs). The mechanical stimulation of cells is driven by linear elasticity, with a fixator incorporated into the computational domain to ensure a stable mechanical environment conducive to bone regeneration, as described in \cite{MahdiJaber}. While the inclusion of the microstructure significantly increases computational complexity and runtime, the framework demonstrates promising results within manageable timeframes. Furthermore, its flexibility allows the consideration of a number of patient-specific parameters, paving the way for a precision medicine approach to  tissue engineering scaffold designs.

The remainder of this article is structured as follows: In the next section, we revisit the ad hoc homogenized mac\-roscopic model, introduce the problem at the microscopic scale, and derive the homogenized quantities as functions of microscale fluctuations. \Cref{sec:micro_macro_coupling} explores the coupling between the microscopic and macroscopic scales, completing the mathematical modeling framework. \Cref{sec:implementation_details} discusses the implementation of the model using the general-purpose finite element method (FEM) framework \lstinline{firedrake}. The integration of the microscopic model into the optimization process, including the derivation of adjoint-based derivatives for the homogenized quantities, is detailed in \Cref{sec:optimization}. Finally, \Cref{sec:simulations} presents and analyzes the results obtained with the proposed framework.

\section{Micro-Macro Model}

We start by revisiting the ad-hoc homogenized mac\-roscopic model for the scaffold mediated bone regeneration process which is based on \cite{ScaffoldOpt,ScaffBone} and adapted slightly. This ad-hoc model turns out to be insufficient for the description of bone regeneration and leads to the necessity to also consider the system on the microscopic scale. Resolving the microstructure explicitly, however, is computationally not feasible. Therefore, we approximate the heterogeneous microstructure by assuming a periodic pattern which reduces the problem significantly as we shall see in the subsequent sections. This is called \emph{Micro-Macro-coupling} because one needs to solve a (periodic) microscopic problem in each macroscopic space which will then be used to determine macroscopic properties, such as the elastic moduli, mechanical stimulus and diffusivity. \Cref{fig:micro_macro_coupling} visualizes this approach for a spatially varying bone volume fraction and a constant scaffold density.

\begin{figure}[ht]
    \centering
    \includegraphics[width=0.9\linewidth]{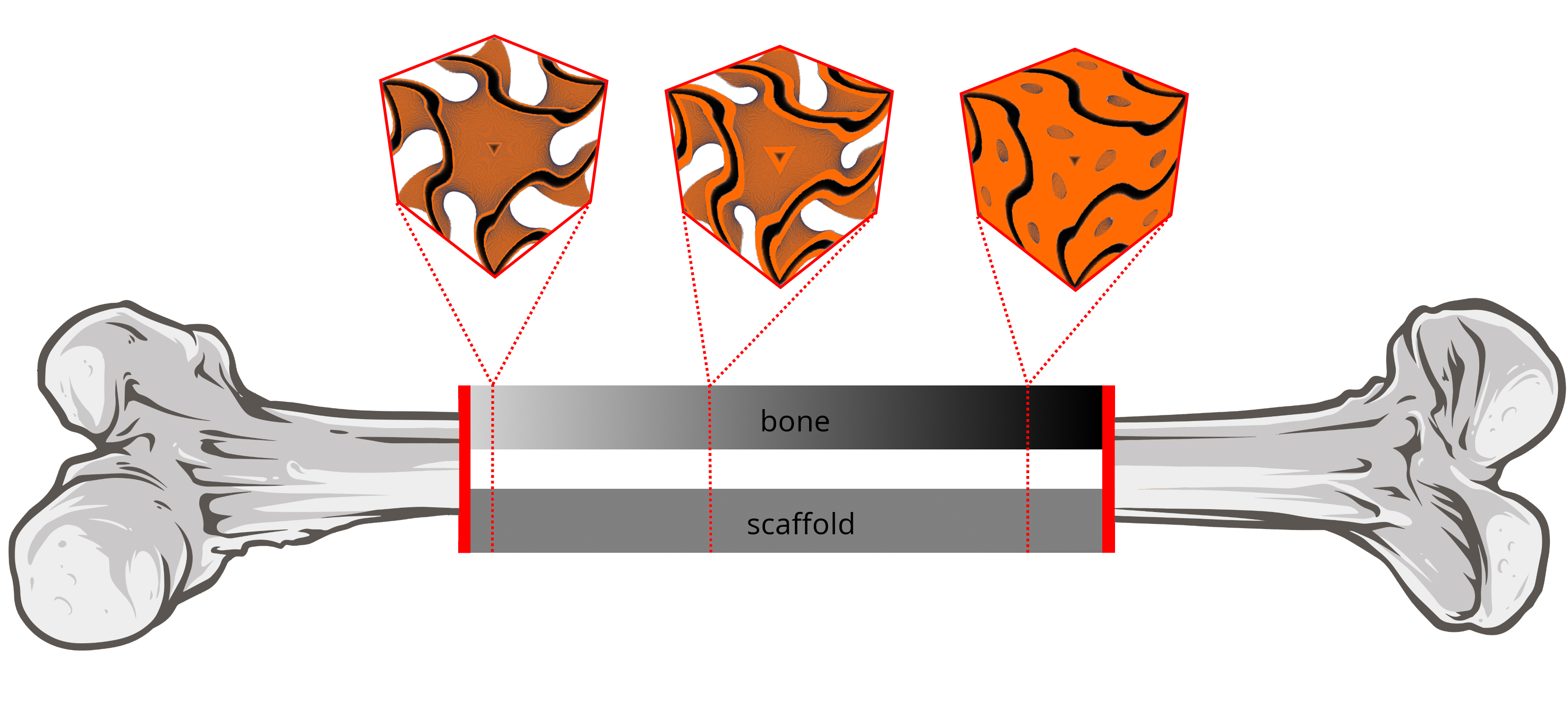}
    \caption{Visualization of the micro-macro-coupling approach where we have a spatially varying bone density (color gradient within defect site) and a constant scaffold density. For each combination of scaffold and bone density one obtains a specific microstructure for which one has to solve the microscopic problem. Above the defect site three different Gyroid microstructures are depicted with gradually increasing bone volume fraction but constant scaffold volume fraction.}
    \label{fig:micro_macro_coupling}
\end{figure}

Roughly, the regeneration process is described in a phenomenological approach by modeling mechanically stimulated cell interactions. To do so, we model in total four different cell types, \emph{progenitor} or \emph{stem cells} (pro), \emph{fibroblasts} (fib), \emph{chondrocytes} (cho) and \emph{osteoblasts} (ost). In \Cref{fig:cell_interactions} the different cell interactions are depicted. Depending on the present stimulus all cells can at least die (decrease in cell population) or proliferate (increase in cell population; modeled through logistic growth). Furthermore, the stem cells can differentiate into other cell types (decrease in stem cell population; increase in other cell population) and these are the only cells to do so. The fibroblasts and stem cells are additionally able to move in space (depicted by a double circle), which is modeled by a diffusion process.

For the mechanical aspect we employ linear elasticity, where the material properties are determined by the bone-scaffold composite. For simplicity, only the osteoblasts are responsible for bone formation. We note that our model is easily extensible to a more general system of cell interactions with bio-active molecules.

\begin{figure}[ht]
    \centering
    \begin{tikzpicture}[->, node distance=8em, every state/.style={draw, minimum size=1.2cm}]
        \node[state, double] (A) {pro};
        \node[state, double] (B) [right of=A] {fib};
        \node[state] (C) [below of=A] {cho};
        \node[state] (D) [below of=B] {ost};
    
        \path (A) edge node[above] {diff.} (B)
              (A) edge node[left] {diff.} (C)
              (A) edge node[right] {diff.} (D);
    
        \path (A) edge[loop above] node[above] {apo. $-$} (A)
              (B) edge[loop above] node[above] {apo. $-$} (B)
              (C) edge[loop below] node[below] {apo. $-$} (C)
              (D) edge[loop below] node[below] {apo. $-$} (D);
    
         \path (A) edge[loop left] node[left] {prolif. $+$} (A)
              (B) edge[loop right] node[right] {prolif. $+$} (B)
              (C) edge[loop left] node[left] {prolif. $+$} (C)
              (D) edge[loop right] node[right] {prolif. $+$} (D);
    
        \node[draw, rectangle, rounded corners, fit=(A) (B) (C) (D), inner xsep=6.3em, inner ysep=5.1em, label=above:{Bio-mechanic Cells}] (Cells) {}; %
    
        \node[draw, rectangle, rounded corners, inner sep=1em] (LinElast) [right=3em of Cells] {Linear Elasticity};
    
        \path (LinElast) edge (Cells);
    \end{tikzpicture}
    \caption{Interaction of cells. The mechanical stimulation given through linear elasticity, depending on this stimulus the cells can either \emph{differentiate}, \emph{proliferate} or die (\emph{apoptosis}). Cells with a double circle are also able to migrate within the bone defect.}
    \label{fig:cell_interactions}
\end{figure}
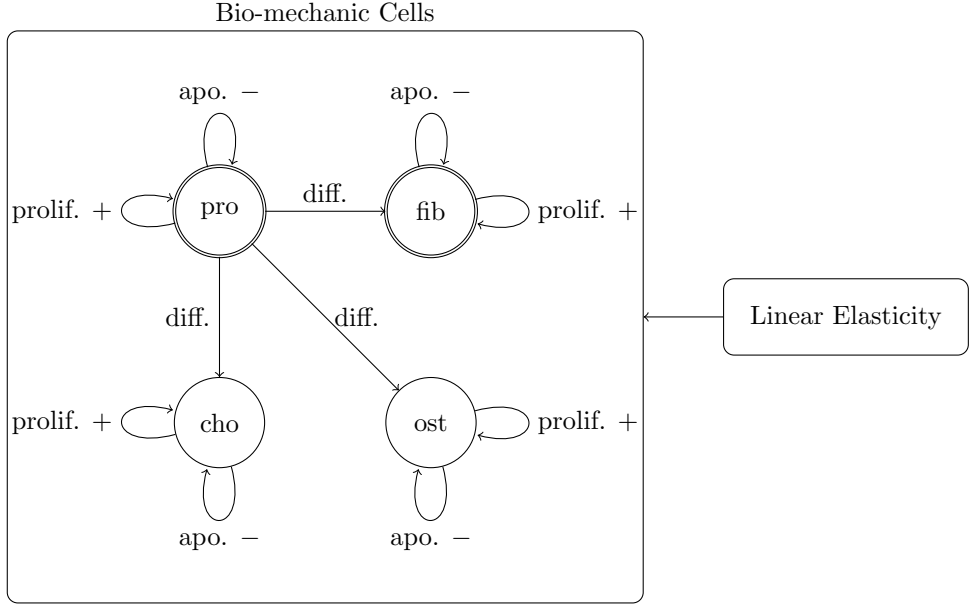

\subsection*{The Macroscopic System of Equations}

The computational domain $\Omega \subset \R^3$ refers not only to the bone defect, which is denoted by $\Omega_\square \subset \Omega$, but also includes a fixator stabilizing the bone defect. Therefore, on both sides of the defect, healthy bone is assumed to be present. Healthy bone is described as a cylindrical ring (i.e., \emph{cortical bone}), denoted by $\Omega_\text{cortical} \subset \Omega$, and inside this ring bone marrow ($\Omega_\text{marrow} \subset \Omega$) is located. Note, that the computational domain is only extended such that we can account for the mechanics regarding the fixator more accurately; cell signaling and behavior is only simulated within the defect. The outer surface of the cortical bone is called the \emph{periosteum} and referred to by $\Gamma_P$. By $I = [0, T]$ we denote a finite regeneration time interval. Furthermore, we introduce the function $\rho(x)$ describing scaffold volume in the defect side for $x \in \Omega_\square$. We decouple spatial and time dependence of the scaffold and only account for bulk erosion by a time-dependent exponential decay factor $\sigma(t) = \exp(-k_1 t)$, such that the product $\sigma(t) \rho(x)$ describes the material properties of the scaffold in time and space.

The material properties of the bone-scaffold composite are described by the stiffness tensor $\stiffness(\sigma(t) \cdot \rho, \cost)$. In the following we will omit the factor $\sigma(t)$ and only write $\rho(x)$. For the macroscopic model we only assume isotropy and ellipticity of the tensor. By $u(t, x)$ we describe the displacement satisfying the mechanical equilibrium below and by $\sym u$ we denote the symmetric gradient of the displacement, i.e., the strain.

Thus, the coupled system of linear elasticity (describing the mechanical environment), reaction-diffusion equations and logistic growth ODEs for the bio-active molecules reads as follows.

\begin{align*}
    \Style{DDisplayFunc=outset,DShorten=false}
    0 &= -\diver(\stiffness(\sigma(t) \cdot \rho, \cost) \sym u)  \tag{mechanical equilibrium} \\
    \pderiv{c_i}{t} &= \diver(\diff_i(\sigma(t) \cdot \rho, \cost) \nabla c_i) + \cR_i(c_1, \ldots, c_N, u, \rho) \tag{reaction-diffusion equations}
    \intertext{where the reaction term $\cR_i$ has the form}
    \cR_i &= \underbrace{k^\mathrm{prolif}_i(\oct) c_i \left(1 - \frac{\sum_i c_i}{1 - \rho}\right)}_{\text{proliferation}} + \underbrace{\mu_i(\oct) \kdiffpro(\oct) \cpro}_{\text{differentiation}} - \underbrace{k^\mathrm{apo}_i(\oct) c_i}_{\text{apoptosis}}.
\end{align*}

For the diffusion coefficients we set $\diff = \diff_\mathrm{pro} = \diff_\mathrm{fi} = \migspeed (1 - \rho)$ and $\diff_\mathrm{cho} = \diff_\mathrm{ost} \equiv 0$. Considering the differentiation of the cells, we set $\mu_\mathrm{pro} \equiv -1$ and assume $\mufi + \mucho + \muost \equiv 1$. The remaining functional relationships are made explicit in \Cref{tab:functional_rel}.

Finally, we also need to introduce boundary conditions. To this end, we assume progenitor cells to be located on the periosteum and inside the bone marrow. From this interface between healthy bone and the defect, progenitor cells are able to migrate into the defect site. For linear elasticity the Dirichlet boundary $\Gamma_D$ is the surface of the distal bone side, which is fully constrained and let the Neumann boundary $\Gamma_N$ be the proximal bone side, where an axial compressive load and two tangential forces inducing bending are imposed. In summary, our boundary conditions read:
\begin{align*}
    \left(\stiffness(\rho(x), \cost(x)) : \sym u\right) \cdot \nu &= g_N(x), && \text{on } \Gamma_N, \\
    u(t, x) &= g_D(x), && \text{on } \Gamma_D,\\
    c_i(0, x) &= 0, && \text{in } \Omega_\square,\\
    \cpro(t, x) &= 0.3, && \text{in } \Omega_\text{marrow} \text{ and on } \Gamma_P, \\
    \cost(t, x) &= 1, && \text{in } \Omega_\text{cortical}, \\
    D \nabla c_i(t, x) \cdot \nu &= 0, && \text{in } \Omega_\square \text{ not adjacent to bone},
\end{align*}
where $\nu$ refers to the outer unit normal of the respective surfaces.

\subsection*{The Microscopic System of Equations}

For the microscopic problem we assume the microstructure to admit a periodic pattern for which we can then apply \emph{periodic homogenization}. Roughly, this procedure seeks to find effective coefficients approximating the heterogeneous structure by a homogeneous one. Mathematically this is done by letting the period of the periodic medium approach $0$. This reduces the problem size drastically since we then only have constant material properties instead of spatially varying ones. In the following presentation we largely follow the results presented by \cite{Allaire}. But since our mechanical stimulus depends non-linearly on the strain, special care has to be taken here to find the actual homogenized stimulus. By $x$ we will denote points in the macroscopic space and by $y$ points in the microscopic space.

In summary, we need to consider microscopic models for the diffusion and linear elasticity. Heuristically, we need to employ periodic homogenization everywhere, where we have spatial derivatives of the microstructural quantities, i.e.\ $\rho$ and $\cost$.

We will start by presenting the microscopic model for linear elasticity. To this end let $Y = [0, 1]^3$ be the rescaled \emph{unit periodicity cell}. By $\cM(Y)$ we denote the admissible \emph{Hooke's laws} consisting of fourth order tensors $\stiffnessmicro(y)$. In fact, we need to solve a microscopic problem for each macroscopic space $x$. Here the material properties are mainly determined by the density values of the scaffolding structure $\rho$ and the osteoblasts $\cost$, in \Cref{eq:micro_elastic_modulus} we will show how to determine Hooke's law rigorously for our model. Altogether, our Hooke's law actually has the form $\stiffnessmicro(x, y) \in L^2(\Omega, \cM(Y))$.

Thus, for each macroscopic point $x \in \Omega$ we need to solve the following \emph{cell problem}
\begin{empheq}[left=\empheqlbrace]{equation}
    \begin{aligned}
        -\diver_y(\stiffnessmicro(x, y)(e_{ij} + \sym_y(w_{ij}(y)))) = 0 \quad &\text{in Y} \\
        y \mapsto w_{ij}(y) \quad & Y\text{periodic},
    \end{aligned}
    \label{eq:linelast_cell_problem}
\end{empheq}
where $e_{ij} = e_i \otimes_\mathrm{s} e_j$ denotes a basis for the symmetric matrices and $w_{ij}$ are called the corresponding \emph{correctors}.

From this, one can then derive the effective elastic coefficient as
\begin{equation}
    \stiffnesshom_{ijkl}(x) = \Int{\stiffnessmicro(x, y)(e_{ij} + \sym_y(w_{ij})) : (e_{kl} + \sym_y(w_{kl}))}{y, Y}.
    \label{eq:hom_elastic_modulus}
\end{equation}

Within the homogenization for linear elasticity there also arises the problem of finding the effective stimulus $S^\mathrm{hom} = \lim_{\varepsilon \to 0} S(\micstrain)$, since it depends on the strain $\sym u_\varepsilon$, where $\varepsilon$ denotes the period of the periodic structure. Unfortunately, due to the non-linearity of the stimulus functions, this does not simply converge to $S(\macstrain) \neq S^\mathrm{hom} $ when $\varepsilon$ approaches $0$. This is due to the the microscopic displacement $u_\varepsilon$ converging only weakly to the homogenized displacement $\uhom$. The problem becomes more apparent when we are considering the actual functional relationships used for the cellular activities, there we do not only consider the octahedral shear strain but prepend a strongly non-linear function $f$ to it. We are therefore considering functions of the type $f \circ \oct$. To tackle this, one needs to include the correctors ($w_{ij}$ in \Cref{eq:linelast_cell_problem}),
into the calculation of the homogenized stimulus $S^\mathrm{hom}$, such that we can obtain a strong convergence result for the microscopic and homogenized strains. More precisely, we make use of the following result for weak limits of rapidly oscillating periodic functions.

\begin{lemma}[Weak limits for rapidly oscillating periodic functions, see \cite{WeakOsc,IntroHomog}]
    Let $f \in L^p(\Omega; L^\infty_\#(Y))$ for some $p \in [1, \infty)$. Then $f_\varepsilon(x) \coloneqq f(x, x/\varepsilon) \in L^p(\Omega)$ with 
    \begin{align*}
        \norm{f_\varepsilon}_{L^p(\Omega)} \leq \norm{f}_{L^p(\Omega; L^\infty_\#(Y))}, \; \text{and} \\
        f_\varepsilon \weakc \Int{f(x, y)}{y,Y} \text{ in $L^p(\Omega)$}.
    \end{align*}
    \label{lem:weak_limits}
\end{lemma}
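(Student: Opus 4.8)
The plan is to reduce the statement to the classical scalar oscillation result and then upgrade it to the $L^\infty_\#(Y)$-valued setting by a density argument anchored on the norm estimate. First I would settle the pointwise bound and the measurability simultaneously. For a.e.\ fixed $x$ one has $|f(x, x/\varepsilon)| \leq \norm{f(x, \cdot)}_{L^\infty_\#(Y)}$, so once $f_\varepsilon$ is known to be measurable, raising to the $p$-th power and integrating over $\Omega$ gives $\norm{f_\varepsilon}_{L^p(\Omega)}^p \leq \int_\Omega \norm{f(x,\cdot)}_{L^\infty_\#(Y)}^p\,\mathrm{d}x = \norm{f}_{L^p(\Omega; L^\infty_\#(Y))}^p$, which is the asserted estimate. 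Measurability — indeed the very well-definedness of the diagonal evaluation $x \mapsto f(x, x/\varepsilon)$ when $f$ is merely essentially bounded in $y$ — is the one genuinely delicate point; I would secure it from strong (Bochner) measurability of $f$, which makes $f$ essentially separably valued and hence approximable in $L^p(\Omega; L^\infty_\#(Y))$ by simple functions $f^{(n)}(x,y) = \sum_k \mathbf{1}_{A_k}(x)\psi_k(y)$ with $A_k \subset \Omega$ measurable and $\psi_k \in L^\infty_\#(Y)$. For such $f^{(n)}$ the composite $f^{(n)}_\varepsilon(x) = \sum_k \mathbf{1}_{A_k}(x)\psi_k(x/\varepsilon)$ is unambiguous and measurable, and the uniform bound above then pins down $f_\varepsilon$ as the corresponding $L^p$-limit.

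The core of the weak convergence is the classical Riemann--Lebesgue-type fact that for a single $\psi \in L^\infty_\#(Y)$, extended $Y$-periodically, $\psi(\cdot/\varepsilon) \weakc \avg{\psi}$ weakly-$*$ in $L^\infty(\Omega)$, i.e.\ $\int_\Omega \psi(x/\varepsilon) h(x)\,\mathrm{d}x \to \avg{\psi}\int_\Omega h(x)\,\mathrm{d}x$ for every $h \in L^1(\Omega)$; this is obtained by first taking $h$ constant on the $\varepsilon$-scaled cells, where periodicity makes the identity exact, and then invoking density of such step functions in $L^1(\Omega)$ together with $\norm{\psi(\cdot/\varepsilon)}_{L^\infty(\Omega)} \le \norm{\psi}_{L^\infty_\#(Y)}$. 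I would apply this factorwise to a simple function: for $\phi \in L^{p'}(\Omega)$ with $1/p + 1/p' = 1$ (since $\Omega$ is bounded, $\phi \in L^1(\Omega)$ and each $\mathbf{1}_{A_k}\phi \in L^1(\Omega)$), testing $f^{(n)}_\varepsilon$ against $\phi$ and passing $\varepsilon \to 0$ in every summand yields $\int_\Omega f^{(n)}_\varepsilon \phi\,\mathrm{d}x \to \int_\Omega \avg{f^{(n)}}\phi\,\mathrm{d}x$, that is $f^{(n)}_\varepsilon \weakc \avg{f^{(n)}}$ in $L^p(\Omega)$.

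Finally I would remove the simplicity assumption by a three-term estimate built on the uniform bound from the first step. Writing $\int_\Omega (f_\varepsilon - \avg{f})\phi = \int_\Omega (f_\varepsilon - f^{(n)}_\varepsilon)\phi + \int_\Omega (f^{(n)}_\varepsilon - \avg{f^{(n)}})\phi + \int_\Omega (\avg{f^{(n)}} - \avg{f})\phi$, the first term is controlled by $\norm{f - f^{(n)}}_{L^p(\Omega; L^\infty_\#(Y))}\norm{\phi}_{L^{p'}(\Omega)}$ (the norm estimate applied to $f - f^{(n)}$), the third by the same quantity after noting $\norm{\avg{f} - \avg{f^{(n)}}}_{L^p(\Omega)} \le \norm{f - f^{(n)}}_{L^p(\Omega; L^\infty_\#(Y))}$ via Minkowski, and the middle term tends to $0$ as $\varepsilon \to 0$ for each fixed $n$ by the previous step. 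Choosing $n$ large to make the outer terms uniformly small in $\varepsilon$ and then sending $\varepsilon \to 0$ closes the argument. I expect the well-definedness and measurability of the diagonal composite $f_\varepsilon$ for a target that is only $L^\infty$ (hence non-separable) in $y$ to be the main obstacle; once that is handled through the strongly-measurable structure, the remaining steps are the standard two-scale oscillation machinery.
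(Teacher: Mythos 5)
The paper does not prove this lemma at all: it is quoted from the cited references (\cite{WeakOsc,IntroHomog}) and used as a black box, so there is no in-paper argument to compare against. Your proof is the standard one for this classical result and is essentially correct: reduce to simple functions $f^{(n)}(x,y)=\sum_k \mathds{1}_{A_k}(x)\psi_k(y)$ via Bochner measurability, invoke the scalar Riemann--Lebesgue-type fact $\psi(\cdot/\varepsilon)\overset{*}{\weakc}\langle\psi\rangle$ in $L^\infty$, and close with the uniform bound $\norm{f_\varepsilon - f^{(n)}_\varepsilon}_{L^p(\Omega)}\le\norm{f-f^{(n)}}_{L^p(\Omega;L^\infty_\#(Y))}$ in a three-term estimate. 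You also correctly identify the only genuinely delicate point, namely that the diagonal trace $x\mapsto f(x,x/\varepsilon)$ is not a priori meaningful for a target that is merely $L^\infty$ in $y$. Two small tightenings are worth making. First, having \emph{defined} $f_\varepsilon$ as the $L^p$-limit of the $f^{(n)}_\varepsilon$, you should add one line identifying it with the pointwise diagonal evaluation: since $f^{(n)}\to f$ in $L^p(\Omega;L^\infty_\#(Y))$, a subsequence satisfies $\norm{f^{(n)}(x,\cdot)-f(x,\cdot)}_{L^\infty_\#(Y)}\to 0$ for a.e.\ $x$, hence $f^{(n)}(x,x/\varepsilon)\to f(x,x/\varepsilon)$ a.e.\ for a suitable common representative, so the abstract limit agrees with the trace and is independent of the approximating sequence. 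Second, in the scalar step the phrase ``$h$ constant on the $\varepsilon$-scaled cells'' conflates an $\varepsilon$-dependent partition with a density argument; the clean version either fixes a partition independent of $\varepsilon$ and counts cells, or replaces $h$ by its cell averages $h_\varepsilon$ and uses $h_\varepsilon\to h$ in $L^1$ together with $\norm{\psi(\cdot/\varepsilon)}_{L^\infty}\le\norm{\psi}_{L^\infty_\#(Y)}$. You also implicitly use $\abs{\Omega}<\infty$ (so that $\mathds{1}_{A_k}\phi\in L^1(\Omega)$ for $\phi\in L^{p'}(\Omega)$); this is harmless here since the paper's $\Omega$ is a bounded computational domain, but it should be stated as a hypothesis if the lemma is meant in the generality written.
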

Furthermore, we make use of the following corrector result within homogenization theory to obtain strong convergence. To this end, we introduce the \emph{corrector tensor} $G_\varepsilon$ defined by
\begin{empheq}[left=\empheqlbrace]{equation}
    \begin{aligned}
        G_\varepsilon(x) &= G\left( \frac{x}{\varepsilon} \right), && \text{a.e. on } \Omega \\
        G_{ijkl}(y) &= \sym_{ij} \chi_{kl}(y), && \text{a.e. on } Y,
    \end{aligned}
    \label{eq:corrector_tensor}
\end{empheq}
where $\chi_{kl}$ is given by $\chi_{kl} \coloneqq w_{kl} + P_{kl}$, with $P_{kl}$ such that $\sym P_{kl} = e_{kl}$. Take note, that in \cite{IntroHomog} they define $\chi_{kl} \coloneqq -w_{kl} + P_{kl}$, this discrepancy is due to their correctors corresponding to the negative unit strains, i.e.\ $-e_{kl}$.
\begin{theorem}[Corrector Result, \cite{IntroHomog}]
    Let $u_\varepsilon$ be the solution to the microscopic problem. Then
    \begin{equation}
        \sym u_\varepsilon - G_\varepsilon \macstrain \to 0 \; \text{strongly in } (L^1(\Omega))^{3 \times 3}.
    \end{equation}
    \label{thm:corrector_result}
\end{theorem}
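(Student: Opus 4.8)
The plan is to prove this by the classical energy method of periodic homogenization, using that the corrector tensor $G_\varepsilon$ from \eqref{eq:corrector_tensor} reproduces exactly the first-order strain oscillations predicted by the two-scale ansatz. First I would record the standard a priori setup: by uniform ellipticity of $\stiffnessmicro$ together with the prescribed loads, the family $u_\varepsilon$ solving the $\varepsilon$-scale elasticity problem $-\diver(\stiffnessmicro(x, x/\varepsilon)\sym u_\varepsilon)=0$ (with the macroscopic boundary data) is bounded in $H^1(\Omega)$. Hence, up to a subsequence, $u_\varepsilon \weakc \uhom$ in $H^1(\Omega)$ and $\micstrain \weakc \macstrain$ in $(L^2(\Omega))^{3\times 3}$, where $\uhom$ solves the homogenized equation $-\diver(\stiffnesshom \macstrain)=0$ with $\stiffnesshom$ given by \eqref{eq:hom_elastic_modulus}. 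The candidate approximant $G_\varepsilon \macstrain$ is then recognised as the leading-order strain of the formal expansion $u_\varepsilon \approx \uhom + \varepsilon\sum_{kl} w_{kl}(x/\varepsilon)\,(\macstrain)_{kl}$: differentiating and symmetrising yields $\macstrain + \sum_{kl}\sym_y w_{kl}(x/\varepsilon)\,(\macstrain)_{kl} = G_\varepsilon \macstrain$ by \eqref{eq:corrector_tensor}.

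The heart of the argument is to show that the nonnegative, coercive quantity
\[
E_\varepsilon \coloneqq \int_\Omega \stiffnessmicro(x, x/\varepsilon)\bigl(\micstrain - G_\varepsilon \macstrain\bigr) : \bigl(\micstrain - G_\varepsilon \macstrain\bigr)\, dx
\]
converges to $0$. Expanding the quadratic form produces three terms. The diagonal term $\int_\Omega \stiffnessmicro \micstrain : \micstrain\, dx$ converges to the homogenized energy $\int_\Omega \stiffnesshom \macstrain : \macstrain\, dx$ by testing the $\varepsilon$-equation with $u_\varepsilon$ and passing to the limit via weak $H^1$-convergence and compactness of the trace on $\Gamma_N$. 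For the mixed term $-2\int_\Omega \stiffnessmicro \micstrain : G_\varepsilon \macstrain\, dx$ and the pure corrector term $\int_\Omega \stiffnessmicro G_\varepsilon \macstrain : G_\varepsilon \macstrain\, dx$ I would invoke \Cref{lem:weak_limits}: the oscillating coefficient $\stiffnessmicro(x, \cdot/\varepsilon)(e_{kl}+\sym_y w_{kl})(\cdot/\varepsilon)$ converges weakly in $L^2$ to its $Y$-average, which is precisely the combination appearing in the definition \eqref{eq:hom_elastic_modulus} of $\stiffnesshom$. Pairing this against the fixed (non-oscillating) factor $\macstrain$, and against the weak limit $\macstrain$ of $\micstrain$, drives both terms to the same value $\int_\Omega \stiffnesshom \macstrain : \macstrain\, dx$, so that the three contributions cancel and $E_\varepsilon \to 0$.

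Uniform ellipticity, $\alpha|\xi|^2 \le \stiffnessmicro \xi : \xi$, then upgrades $E_\varepsilon \to 0$ to $\norm{\micstrain - G_\varepsilon \macstrain}_{L^2(\Omega)}^2 \to 0$, and since $\Omega$ is bounded this yields the asserted strong convergence in $(L^1(\Omega))^{3\times 3}$. Because the correctors $w_{kl}$ only live in $H^1_\#(Y)$, for a general limit strain $\macstrain \in L^2$ the product $G_\varepsilon \macstrain$ is only $L^1$, so I would first carry out the whole computation for smooth $\macstrain$ (where all products make classical sense) and then pass to the general case by density, controlling the remainder through the uniform bound $\norm{G_\varepsilon}_{L^2(\Omega)} \le \norm{G}_{L^2(Y)}$ furnished by \Cref{lem:weak_limits}. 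This density step is exactly why the conclusion is naturally stated in $L^1$ rather than $L^2$.

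The step I expect to be the main obstacle is the mixed term $\int_\Omega \stiffnessmicro \micstrain : G_\varepsilon \macstrain\, dx$: here one multiplies the weakly converging strain $\micstrain$ against a weakly converging oscillating coefficient, and a product of two merely weakly convergent sequences does not pass to the limit in general. Justifying this limit requires the compensated-compactness (div--curl) structure of the problem — $\micstrain$ is a symmetric gradient while $\stiffnessmicro \micstrain$ has controlled divergence, and, by the cell problem \eqref{eq:linelast_cell_problem}, the oscillating field $\stiffnessmicro(x, \cdot/\varepsilon)G_\varepsilon$ tested against constant strains is divergence-free in $y$. It is precisely at this point that the specific construction \eqref{eq:linelast_cell_problem}--\eqref{eq:corrector_tensor} is indispensable, and this is the step I would treat with the most care.
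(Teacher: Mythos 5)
The paper does not actually prove \Cref{thm:corrector_result}: it is quoted verbatim from the homogenization literature (\cite{IntroHomog}, where it appears as the classical corrector theorem) and used as a black box to derive \Cref{thm:hom_stimulus}. There is therefore no in-paper argument to compare against. Judged on its own terms, your sketch is a faithful reconstruction of the standard energy/compensated-compactness proof: the expansion of the coercive quadratic form $E_\varepsilon$, convergence of energies via testing the $\varepsilon$-problem, identification of the cross and pure-corrector terms with the homogenized energy through the weak limits of the oscillating fields, and the observation that the mixed term is the genuinely delicate step requiring the div--curl structure ($\stiffnessmicro(\cdot,\cdot/\varepsilon)\micstrain$ divergence-free against the symmetric gradient $G_\varepsilon\Phi=\sym\!\big(\varepsilon\chi_{kl}(\cdot/\varepsilon)\Phi_{kl}\big)$ for piecewise-constant $\Phi$) are all exactly the ingredients of the proof in \cite{IntroHomog}. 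Your explanation of why the conclusion lives in $L^1$ rather than $L^2$ --- $G\in L^2_\#(Y)$ only, so $G_\varepsilon\macstrain\in L^1$ for general $\macstrain\in L^2$, forcing an approximation of $\macstrain$ by smooth or step functions --- is also the correct and essential point.

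Two small imprecisions are worth fixing. First, the uniform bound $\norm{G_\varepsilon}_{L^2(\Omega)}\leq C\norm{G}_{L^2_\#(Y)}$ does not follow from \Cref{lem:weak_limits} as stated, since that lemma requires $f\in L^p(\Omega;L^\infty_\#(Y))$ and $G$ is only square-integrable in $y$; the bound is instead the elementary fact that for $g\in L^2_\#(Y)$ the rescalings $g(\cdot/\varepsilon)$ have $\abs{g}^2(\cdot/\varepsilon)\weakc\int_Y\abs{g}^2\dd{y}$, hence uniformly bounded $L^2(\Omega)$ norms. Second, the density step is slightly more structured than ``do the computation for smooth $\macstrain$ and pass to the limit'': one must insert the approximant $\Phi$ only into the corrector term, establish $\limsup_\varepsilon E_\varepsilon(\Phi)\leq C\norm{\macstrain-\Phi}_{L^2}^2$ uniformly in $\varepsilon$, and only then combine with the Cauchy--Schwarz control of $\norm{G_\varepsilon(\macstrain-\Phi)}_{L^1}$; the naive version risks circularity. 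Neither point invalidates the plan --- they are exactly the technicalities handled in \cite{IntroHomog} --- but they are where the details have to be written carefully.
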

Combining \Cref{lem:weak_limits,thm:corrector_result} we obtain the following result for the homogenized stimulus.

\begin{theorem}[Homogenized Stimulus]
    Let  $\Omega$ be bounded and $S \colon \R^{3\times3} \to \R$ be Lipschitz and bounded. Then it holds
    \begin{equation}
        S(\sym u_\varepsilon) \stackrel{\varepsilon \to 0}{\weakc} \Int{S(G(y) \macstrain)}{y,Y} \eqqcolon S^{\mathrm{hom}} \text{ in } L^2(\Omega).
    \end{equation}
    \label{thm:hom_stimulus}
\end{theorem}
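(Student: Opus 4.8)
The plan is to compare the nonlinear quantity $S(\micstrain)$ with the explicitly oscillating surrogate $\tilde S_\varepsilon(x) \coloneqq S(G_\varepsilon(x)\macstrain(x))$, whose weak limit can be read off directly from \Cref{lem:weak_limits}, and then to absorb the difference using the corrector estimate of \Cref{thm:corrector_result}.

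First I would observe that $\tilde S_\varepsilon(x) = h(x, x/\varepsilon)$ for the function $h(x,y) \coloneqq S(G(y)\macstrain(x))$. Because $S$ is continuous and $G$ is measurable and $Y$-periodic while $\macstrain \in (L^2(\Omega))^{3\times 3}$, the integrand $h$ is a Carathéodory function; and since $S$ is bounded, say $\abs{S} \leq M$, one has $\norm{h(x,\cdot)}_{L^\infty_\#(Y)} \leq M$ for a.e.\ $x$, so that $h \in L^2(\Omega; L^\infty_\#(Y))$ as $\Omega$ is bounded. \Cref{lem:weak_limits} then gives $\tilde S_\varepsilon \weakc \int_Y S(G(y)\macstrain(\cdot))\,\mathrm{d}y = S^{\mathrm{hom}}$ weakly in $L^2(\Omega)$, which is exactly the claimed limit.

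Next I would estimate the error $d_\varepsilon \coloneqq S(\micstrain) - \tilde S_\varepsilon$. The Lipschitz bound on $S$, with constant $L$, yields the pointwise inequality $\abs{d_\varepsilon(x)} \leq L\,\abs{\micstrain(x) - G_\varepsilon(x)\macstrain(x)}$, and integrating over $\Omega$ gives $\norm{d_\varepsilon}_{L^1(\Omega)} \leq L\,\norm{\micstrain - G_\varepsilon\macstrain}_{(L^1(\Omega))^{3\times 3}} \to 0$ by \Cref{thm:corrector_result}.

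The main obstacle is that the two ingredients live in different convergence modes: the corrector result only supplies strong $L^1$ control of $d_\varepsilon$, whereas the target is weak convergence in $L^2$. I would resolve this by exploiting the boundedness of $S$ a second time. Since $\abs{d_\varepsilon} \leq 2M$ uniformly in $\varepsilon$, the interpolation estimate $\norm{d_\varepsilon}_{L^2(\Omega)}^2 \leq \norm{d_\varepsilon}_{L^\infty(\Omega)}\,\norm{d_\varepsilon}_{L^1(\Omega)} \leq 2M\,\norm{d_\varepsilon}_{L^1(\Omega)} \to 0$ upgrades $d_\varepsilon \to 0$ to strong, hence weak, convergence in $L^2(\Omega)$. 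Writing $S(\micstrain) = \tilde S_\varepsilon + d_\varepsilon$ and adding the weak $L^2$ limit $S^{\mathrm{hom}}$ of $\tilde S_\varepsilon$ to the (strong, thus weak) null limit of $d_\varepsilon$ then gives $S(\micstrain)\weakc S^{\mathrm{hom}}$ in $L^2(\Omega)$. The only technical points needing care are verifying that $h$ is an admissible integrand for \Cref{lem:weak_limits}, namely joint measurability together with the uniform $L^\infty_\#(Y)$ bound, and checking that $\macstrain$ carries enough integrability for $G_\varepsilon\macstrain$ and the error in \Cref{thm:corrector_result} to be well defined.
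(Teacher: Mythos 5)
Your proposal is correct and follows essentially the same route as the paper: replace $S(\micstrain)$ by the corrected surrogate $S(G_\varepsilon\macstrain)$, identify its weak limit via \Cref{lem:weak_limits}, and kill the difference using the Lipschitz bound together with \Cref{thm:corrector_result}. Your explicit interpolation step $\norm{d_\varepsilon}_{L^2}^2 \leq \norm{d_\varepsilon}_{L^\infty}\norm{d_\varepsilon}_{L^1}$ is a welcome addition, since the paper passes from the strong $L^1$ convergence of the difference to the weak $L^2$ statement without spelling out this upgrade.
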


\begin{proof}
    The functions that we seek to consider have the structure
    \begin{align*}
        T \colon &\Omega \to L^\infty_\#(Y) \\
        & x \mapsto \left( y \mapsto S \left( \sym u_\varepsilon(y; x) \right) \right).
    \end{align*}
    Due to the Lipschitz continuity and \Cref{thm:corrector_result} we obtain
    \begin{align*}
        &\abs{S(\sym u_\varepsilon(x/\varepsilon; x)) - S(G(x/\varepsilon; x) \macstrain(x))} \\
        \leq L  &\norm{\sym u_\varepsilon(x/\varepsilon; x) - G(x/\varepsilon; x) \macstrain(x)} \longrightarrow 0 \quad \text{strongly in } L^1(\Omega)^{3 \times 3}.
    \end{align*}
    Therefore, we can equivalently also consider the \emph{corrected} function
    \begin{align*}
        T \colon &\Omega \to L^\infty_\#(Y) \\
        & x \mapsto \left( y \mapsto S \left( G(y; x) \macstrain(x) \right) \right).
    \end{align*}
    We have $\macstrain \in L^2(\Omega)$, $\sym w_{kl} \in L^2_\#(Y)$ and thus for fixed $x \in \Omega$ it holds that $G(y; x) \in L^2_\#(Y)$. Due to the boundedness of $S$ and $Y$, we directly get $T(x) \in L^\infty_\#(Y)$.
    The boundedness of $\Omega$ then yields $T \in L^2(\Omega, L^\infty_\#(\Omega))$ since 
    \[
        \Int{\norm{T(x)}_{L^\infty_\#(Y)}^2}{x, \Omega} \leq C^2 \abs{\Omega}.%
    \]
    Combining everything and applying \Cref{lem:weak_limits} we arrive at
    \[
       S(\sym u_\varepsilon(x/\varepsilon; x)) = S(G(x/\varepsilon; x) \macstrain(x)) \weakc \Int{S(G(y; x) \macstrain(x))}{y, Y},
    \]
    concluding the proof.
\end{proof}

\begin{remark}
    In our setting, the stimulus functions have the form $S = f \circ \oct$, where $f$ is either a mollifier or a sigmoidal function and thus bounded. Furthermore, $f$ and $\oct$ are both Lipschitz and therefore fulfill the requirements to apply \Cref{thm:hom_stimulus}. Computationally, this is very expensive though, because for each functional relationship the homogenized version of it has to be computed.
\end{remark}

As a final step in our homogenization procedure, we need to consider the diffusion process, where we again follow the presentation of \cite{Allaire}. This reduces to the standard conductivity problem
\[
    -\diver(\diff(x) \nabla w) = 0
\]
and for each unit conductivity $e_i$, we need to solve (again for each macroscopic point) the following cell problem
\begin{equation}
    -\diver(\diffmic(x, y) (e_i + \nabla w)) = 0.
\end{equation}
We are then able to compute the effective diffusion coefficient $\diffhom$ as
\begin{equation}
    \diffhom_{ij}(x) = \Int{\diffmic(x, y) (e_i + \nabla_y w_i) \cdot (e_j + \nabla_y w_j)}{y, Y}.
    \label{eq:hom_diffusivity}
\end{equation}

In the next section, \Cref{sec:micro_macro_coupling}, we will make the choices for the elastic moduli and diffusivity more concrete.

In total, in the weak sense, the (periodically) homogenized model reads as follows.

\begin{empheq}[left=\empheqlbrace]{align*}
    \Style{DDisplayFunc=outset,DShorten=false}
    0 &= -\diver(\stiffnesshom(\sigma(t) \cdot \rho, \cost^\mathrm{hom}) \macstrain), \\
    \pderiv{c^\mathrm{hom}_i}{t} &= \diver(\diffhom_i(\sigma(t) \cdot \rho, \cost^\mathrm{hom}) \nabla c^\mathrm{hom}_i) + \cR^\mathrm{hom}_i(c^\mathrm{hom}_1, \ldots, c^\mathrm{hom}_N, u^\mathrm{hom}, \rho),\\
    \cR^\mathrm{hom}_i &= \underbrace{(k^\mathrm{prolif}_i(\oct))^\mathrm{hom} c^\mathrm{hom}_i \left(1 - \frac{\sum_i c^\mathrm{hom}_i}{1 - \rho}\right)}_{\text{proliferation}} \\ &+ \underbrace{(\mu_i(\oct) \kdiffpro(\oct))^\mathrm{hom} \cpro^\mathrm{hom}}_{\text{differentiation}} \\ &- \underbrace{(k^\mathrm{apo}_i(\oct))^\mathrm{hom} c^\mathrm{hom}_i}_{\text{apoptosis}}.
\end{empheq}

\section{Micro-Macro Coupling}\label{sec:micro_macro_coupling}

This section is devoted to rigorously presenting the choice for the elastic moduli and diffusivity of the microscopic model. Throughout this work, we assume that osteoblasts only grow on the surface of the scaffolding structure (see \Cref{fig:bone_scaffold_composite} for an illustration of the microscopic bone-scaffold composite). To this end, we make an explicit choice to the geometry of the underlying microstructure, in general there are multiple different approaches that can be considered. In this work though, we will restrict our attention to two main geometries, the \emph{strut-like} and the \emph{Gyroid} structure. The former geometry arises due to its canonical and simple nature, whereas the second one seeks to mimic the bone microarchitecture as its found in real bone of animals. In \Cref{fig:base_structures}, these two types of geometry are depicted for one unit cell.

\begin{figure}[ht]
    \centering
    \begin{subfigure}[c]{0.4\textwidth}
        \includegraphics[width=\textwidth]{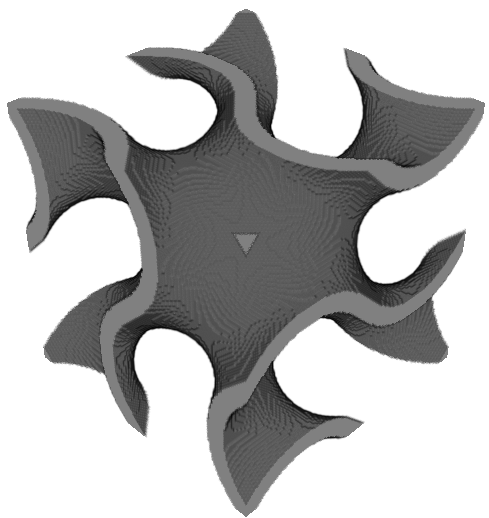}
        \caption{Gyroid}
    \end{subfigure}
     \begin{subfigure}[c]{0.4\textwidth}
        \includegraphics[width=\textwidth]{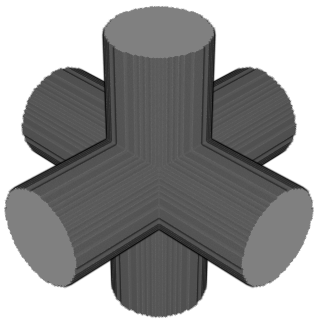}
        \caption{Strut-like}
    \end{subfigure}
    \caption{The two base geometries that will be used throughout this work.}
    \label{fig:base_structures}
\end{figure}

The Gyroid structure \cite{Gyroid,TPMSTissue} exhibits several advantageous properties beyond its biomimetic geometry: it is a \emph{triply periodic minimal surface} (TPMS), possesses high stiffness, and features continuous curvature, thus preventing stress concentrations and establishing a stable mechanical environment. Furthermore, it has high permeability and due to its open architecture facilitates cell infiltration into the scaffold.

\begin{figure}[ht]
    \centering
    \begin{subfigure}[c]{0.4\textwidth}
        \includegraphics[width=\textwidth]{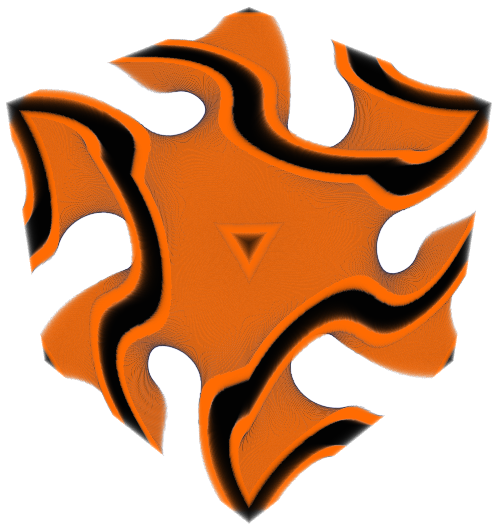}
        \caption{Gyroid}
    \end{subfigure}
     \begin{subfigure}[c]{0.4\textwidth}
        \includegraphics[width=\textwidth]{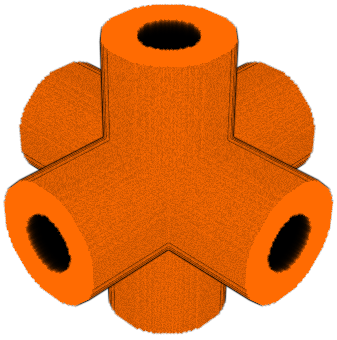}
        \caption{Strut-like}
    \end{subfigure}
    \caption{Illustration of how the bone (orange) grows only on the surface of the scaffolding structure (black) and thus forming the bone-scaffold composite on the microscopical level.}
    \label{fig:bone_scaffold_composite}
\end{figure}

In fact, the elastic moduli and diffusivity only depend on the values of the scaffold $\rho$ and the osteoblasts $\cost$. To this end, we define our geometry as an implicit surface with a certain thickness.
Let $f$ be the implicit function determining the surface, in the case of the Gyroid and Strut-like structure we have

\begin{minipage}{0.4\textwidth}
    \centering
   \begin{align*}
        f(x,y,z) \coloneqq &\cos(2\pi x) \sin(2\pi y) \\
        +& \cos(2\pi y) \sin(2\pi z) \\
        +& \cos(2\pi z) \sin(2\pi x).
    \end{align*}
    Gyroid.
\end{minipage}%
\begin{minipage}{0.55\textwidth}
    \centering
   \begin{align*}
        f(x,y,z) \coloneqq & \min((x - 0.5)^2 + (y - 0.5)^2, \\
        &(x - 0.5)^2 + (z - 0.5)^2,\\
        &(y - 0.5)^2 + (z - 0.5^2)).
    \end{align*}
    Strut-like.
\end{minipage}

\vspace*{1em}
Note that the provided implicit function for the Gyroid is an approximation; however, it represents a highly accurate one.

The implicit surface is then given as
\[
    \Gamma \coloneqq \setc*{(x, y, z) \in Y}{f(x, y, z) = 0}.
\]
To give the surface a certain thickness and to make the bone only grow on top of the surface of the scaffolding structure, we introduce the thickness parameters $\alpha, \beta \geq 0$ and define the scaffold and bone, respectively, as
\begin{align*}
    \cS_\alpha &\coloneqq \setc*{y \in Y}{\abs{f(y)} \leq \alpha},\\
    \cB_{\alpha,\beta} &\coloneqq \setc*{y \in Y}{\alpha \leq \abs{f(y)} \leq \beta}.
\end{align*}
Since $\rho$ and $\cost$ corresponds to volume densities, we need to find a map that maps volume densities to the thickness parameters. That is, for $\alpha$ we need to find the inverse function $\varrho^{-1}$ to 
\begin{equation*}
    \varrho \colon \alpha \mapsto \Int{\mathds{1}_{\cS_\alpha}}{y,Y}.
\end{equation*}
For determining the parameter $\beta$ we proceed similarly and consider the map
\[
    \varrho \colon (\alpha, \beta) \mapsto \Int{\mathds{1}_{\cB_{\alpha, \beta}}}{y,Y}.
\]
To summarize, we proceed along the following scheme to determine the correct values of $\alpha$ and $\beta$ corresponding to their volume densities for the scaffold and bone geometry:
\begin{enumerate}
    \item Retrieve the value for $\alpha$ by means of the mapping $\varrho^{-1}(\rho)$.
    \item Then evaluate the function $\varrho^{-1}(\alpha, \cost)$ to get $\beta$, where $\alpha$ denotes the value calculated in the first step.
\end{enumerate}
In total we can define the volumes to thicknesses mapping as
\[
    \mathrm{geometry} \colon (\rho, \cost) \mapsto (\varrho^{-1}(\rho), \varrho^{-1}(\varrho^{-1}(\rho), \cost)).
\]
Take note that this offers a general scheme for general structures, such as the strut-like geometry or for example also the \emph{Schwarz P} (which is a smooth approximation to the strut-like structure) and \emph{Diamond} structures, see e.g.~\cite{GyroidDondl}. In the following we will omit the mappings to retrieve the values and just write $\alpha$ and $\beta$ corresponding to the volume densities.

For the macroscopic quantities $\rho$ and $\cost$ we are now able to define the microscopic elastic modulus as 
\begin{equation}
    \begin{aligned}
        \stiffnessmicro(\rho, \cost, y) &= \mathds{1}_{\cS_\alpha} \cC(\lambda_\rho, \mu_\rho) + \mathds{1}_{\cB_{\alpha, \beta}} \cC(\lambda_{\mathrm{ost}}, \mu_{\mathrm{ost}}) \\
    \cC(\lambda, \mu) \varepsilon : \varepsilon &= 2 \mu \varepsilon : \varepsilon + \lambda \Tr(\varepsilon)\Tr(\varepsilon),
    \end{aligned}
    \label{eq:micro_elastic_modulus}
\end{equation}
where $\lambda$ and $\mu$ correspond to the Lam\'{e} parameters of the respective material.

For the diffusivity we can proceed similarly, we first need to determine the diffusivity corresponding to the migration speed of the cells denoted by $\migspeed$. %
Then we obtain the microscopic diffusion coefficient as
\begin{equation}
    \diffmic(\rho, \cost, y) = \migspeed (\mathds{1}_{\cS_\alpha} + \mathds{1}_{\cB_{\alpha,\beta}}) = \migspeed \mathds{1}_{\cS_\beta},
    \label{eq:micro_diffusivity}
\end{equation}
that is, the material matrix is given as the combination of the bone and scaffold geometry, such that the cells are not able to migrate there.
Note, that this is a simplified assumption on how bone grows and might differ from reality in the sense that the bone does not grow uniformly on the surface of the scaffolding structure; instead one would also need to consider the curvature of the surface which affects bone growth \cite{BoneCurvature}.

Now we have all the ingredients to perform the micro-macro coupling by using periodic homogenization. To do so, one needs to proceed along the following scheme (see also \Cref{fig:micro_macro_coupling}):
\begin{enumerate}
    \item Let $x \in \Omega$ be in macroscopic space. Determine the macroscopic density values $\rho(x)$ and $\cost(x)$ for scaffold and bone, respectively.
    \item Map the densities to the thickness parameters $\alpha$ and $\beta$ and determine the microscopic elastic modulus $\stiffnessmicro$ and diffusivity $\diffmic$ (see \Cref{eq:micro_elastic_modulus,eq:micro_diffusivity}, respectively).
    \item Perform periodic homogenization to obtain the effective coefficients $\stiffnesshom(x)$ and $\diffhom(x)$ for each integration point $x$, for linear elasticity one also needs the correctors (see \Cref{eq:hom_elastic_modulus,eq:hom_diffusivity}, respectively).
    \item Plug in the homogenized coefficients into the macroscopic system of equations and first solve only for the homogenized displacement.
    \item Calculate the effective functional relationships $(f \circ \oct)^\mathrm{hom}$ (see \Cref{thm:hom_stimulus}) using the previously calculated correctors.
    \item Solve for homogenized cell fields under consideration of the effective stimulus.
\end{enumerate}
In literature this procedure is referred to by FE$^2$, because you apply the FEM in the microscopic space to get the effective behavior, and with these information you solve on the macroscopic scale, also by the FEM, to get the overall response. In general, you can choose different approaches per computational scale. For example, in this work we employ FE-FFT for linear elasticity and FE$^2$ for diffusion.

\section{Implementation Details}
\label{sec:implementation_details}

In the following we will make the choice for the parameters and solving details in our simulation concrete. For solving the PDE system we use \lstinline{firedrake} \cite{FiredrakeUserManual} which offers a general purpose framework for solving PDEs by means of the FEM. There, we make extensive use of \lstinline{ExternalOperator}s \cite{ExternalOp1,ExternalOp2} which offer the ability to include user-defined expressions that are not part of the unified form language \lstinline{ufl} \cite{ufl}. The homogenized coefficients for linear elasticity and diffusion are precalculated for discrete samples of the densities of the scaffolding and bone fields. These results are then saved onto the disk and belong to the offline part of our simulation. For the online calculations these calculated coefficients are then read from disk and a nearest neighbor interpolation on the densities is done to extract the appropriate homogenized quantities and respective correctors to perform online calculation of the effective stimulus. This is done by means of \lstinline{ExternalOperator}s in each macroscopic space point.

As indicated earlier, we will use FFT-based methods for solving the microscopic linear elasticity problem. To this end, the cell problem is recast into an integral equation, the so called \emph{periodic Lippmann-Schwinger} equation. This can then be solved by means of the basic FFT-based fixed-point scheme \cite{Moulinec}. But since we are dealing with high contrastive material this scheme is insufficient, and we use more sophisticated solving approaches, that is, we employ the conjugate gradient (CG) method and the discretization is done on a staggered grid \cite{CompHom,FFTReview}.

For solving the macroscopic linear elastic system efficiently, we employed a Krylov subspace method (GMRES) combined with a multigrid preconditioner based on the geometric algebraic multigrid (GAMG) approach.
The coupled system of cell interactions is more challenging to solve, since the (anisotropic and heterogeneous) diffusion introduces numerical oscillations which leads to physically implausible solutions, i.e.\ the cell densities overshoot and undershoot the range $[0, 1]$. To tackle this, the problem is recast into variational inequalities for which we then use a semi-smooth Newton method with cell densities in the appropriate range $[0, 1]$ as constraints. This approach largely follows the work presented in \cite{VariationalInequalities}. Using a semi-smooth Newton method enables us to calculate the necessary adjoint derivatives using generalized differentials and therefore does not introduce additional complexity when solving the adjoint equation for the optimization problem; see \Cref{sec:optimization}.

For the time-discretization, we employ a semi-implicit Euler-scheme due to its numerical stability and accuracy, allowing larger time-step sizes without compromising stability.

Our computational domain is taken from \cite{MahdiJaber} where an external fixator is explicitly modeled for providing a controlled mechanical environment that supports healing. Furthermore, this allows comparability of the approaches presented by \cite{MahdiJaber} and the one presented in this work. In short, we want to simulate the scaffold mediated bone growth for a rat femur with a diameter of \SI{2}{\milli\m} with a critical bone defect replicated by a \SI{5}{\milli\m}-wide gap in which the scaffolding structure is located. For the material properties also coincide with those mentioned in \cite{MahdiJaber}, i.e.\ for the fixator we use Polyether-ether-ketone: PEEK ($E = \SI{3800}{\mega\pascal}$, $\nu = 0.3$), for the nails titanium: Ti ($E = \SI{111000}{\mega\pascal}$, $\nu = 0.33$) and for the scaffold Polycaprolactone: PCL ($E = \SI{350}{\mega\pascal}$, $\nu = 0.33$). In contrast to their work, we do not differentiate between immature and mature bone, but only consider mature bone ($E = \SI{5000}{\mega\pascal}$, $\nu=0.3$).

For the mechanical loading conditions (\enquote{aimed to simulate peak load under normal walking conditions}) the distal part was fully constrained (homogeneous Dirichlet boundary condition) and for the proximal bone side an axial compressive load of \SI{14.7}{\newton} and two tangential forces of \SI{1.8}{\newton} (inducing bending loads) were applied (inhomogeneous traction boundary condition).
For the Neumann boundary $\Gamma_N$ we only consider the part belonging to the cortical bone and do not apply mechanical loading on the bone marrow.
This leads to the following boundary conditions for the mechanical equilibrium
\begin{align*}
    u &\equiv 0 \quad \text{on } \Gamma_D \\
    \sigma \cdot \nu &= (\sigma_\mathrm{comp} + \sigma_\mathrm{bend}) \cdot \nu \quad \text{on } \Gamma_N,
\end{align*}
where
\begin{equation*}
    \sigma_\mathrm{comp} = \begin{pmatrix}
-\frac{\SI{14.7}{\newton}}{\abs{\Gamma_N}} & 0 & 0\\
0 & 0 & 0 \\
0 & 0 & 0
\end{pmatrix}, \quad \sigma_\mathrm{bend} = \begin{pmatrix}
0 & -\frac{\SI{1.8}{\newton}}{\abs{\Gamma_N}} & \frac{\SI{1.8}{\newton}}{\abs{\Gamma_N}} \\
0 & 0 & 0 \\
0 & 0 & 0
\end{pmatrix}.
\end{equation*}

Now we turn our attention to determining the functional relationships, see \Cref{tab:functional_rel}, based on the cell rates given in \cite[Table 3]{MahdiJaber}. Following \cite{Checa}, mechanical stimulation of a specific cell type resulted in maximal proliferation of that cell type, no proliferation and maximal apoptosis of the other cell types and also maximal differentiation of stem cells into the specific cell phenotype.
Let $S = \frac{\oct}{a}$, $a \coloneqq 0.0375$ \cite{Huiskes}, denote our mechanical stimulus, then for the progenitor cell differentiation we obtain the following stimulus-dependent rules (see \Cref{table:mechano_algorithm}).
\begin{table}[ht]
    \centering
    \begin{tabular}{llll} 
     \textbf{Bone resorption} & \textbf{Osteoblasts} & \textbf{Chondrocytes} & \textbf{Fibroblasts} \\[1ex]
     $S \leq 0.01$ & $0.01 < S \leq 3$ & $3 < S \leq 5$ & $5 < S$ \\
    \end{tabular}
    \caption{Mechano-regulation algorithms for progenitor cell differentiation, adapted from \cite[Table 2]{MahdiJaber}.}
    \label{table:mechano_algorithm}
\end{table}
For the forward model testing the stimulus in this discontinuous manner is perfectly fine, but since we want to perform optimization of the scaffolding structure differentiable testing functions are necessary. To this end, we approximate the indicator functions $\mathds{1}_{[a, b]}$ and $\mathds{1}_{[a, \infty)}$ by means of mollifiers and sigmoidal functions, respectively.

\begin{table}[ht]
    \centering
    \begin{tabular}{lccc}\toprule
        $k_{\mathrm{cell}}^{\mathrm{type}}(S)$ & \multicolumn{3}{c}{Type}\\ \cmidrule{2-4}
        Cell & prolif & diff & apo \\ \midrule
        pro & $0.6 \cdot \sigma_{0.01}$ & $-\ln(0.7) \cdot \sigma_{0.01}$ & $-\ln(0.95) \cdot (1 - \sigma_{0.01})$ \\
        fib & $0.55 \cdot \sigma_5$ & --- & $-\ln(0.95) \cdot (1 - \sigma_5)$ \\
        cho & $0.2 \cdot \phi_{3, 5}$ & --- & $-\ln(0.9) \cdot (1 - \phi_{3, 5})$ \\
        ost & $0.3 \cdot \phi_{0.01, 3}$ & --- & $-\ln(0.84) \cdot (1 - \phi_{0.01, 3})$ \\ \bottomrule
    \end{tabular}
    \caption{Functional relationships for cell interactions.}\label{tab:functional_rel}
\end{table}

For the diffusion coefficient $\migspeed$ we chose the value $\num{6e-4}$, such that the defect side is fully infiltrated by progenitor cells within $\SI{16}{\week}$. To obtain similar osteoblast evolution as in \cite{MahdiJaber}, one could set $\migspeed$ to \num{1.2e-3}, but this effectively only compresses the simulation time into \SI{84}{\day}.

\section{Optimization}
\label{sec:optimization}

As mentioned in the previous section, we use the framework \lstinline{firedrake} to solve our system. But the choice for using this framework actually lies in its ability to perform automatic differentiation and thus enables us to perform PDE constrained optimization relatively straightforward. One only needs to provide the (adjoint) derivatives for the user-defined expressions (\lstinline{ExternalOperator}s). To be more precise, we want to use PDE constrained optimization (see \cite{PDEOpt} for a thorough introduction to this topic) using the adjoint approach, so that we find a scaffolding structure which stimulates bone regeneration the most.

For the ad-hoc homogenized model a rigorous mathematical justification for the optimization has been provided in \cite{ScaffoldOpt}. For this work we can largely follow the approach and results presented there. In contrast to their work, we additionally perform periodic homogenization and need to provide the appropriate derivatives of the effective quantities. To this end, we make extensive use of the following \emph{variational characterization} of the effective elastic modulus and diffusion coefficient, which significantly simplifies the respective derivatives. More precisely, following \cite[Eq. (1.14) and Eq (1.23)]{Allaire}, we have 
\begin{align*}
    \diffhom \xi \cdot \xi &= \min_{w(y) \in H^1_\#(Y)} \Int{\diffmic(y) (\xi + \nabla_y w) \cdot (\xi + \nabla_y w)}{y, Y}, \tag{diffusion} \\
    \stiffnesshom \xi : \xi &= \min_{w(y) \in H^1_\#(Y)^3} \Int{\stiffnessmicro(y) (\xi + \sym_y w) : (\xi + \sym_y w)}{y, Y} \tag{elasticity}.
\end{align*}
This means that the correctors are given as the minimizers of a functional and due to this property the derivative of the functional evaluated at the corrector is zero. From this, and since we calculated the correctors for all unit strains, we obtain that the homogenized quantities have derivatives $\GD_\rho$ and $\GD_{\cost}$ equal to $0$.

Now it only remains to calculate the action of the adjoint derivative for the homogenized stimulus. To this end, let
\begin{equation*}
    N \colon (\strain, \rho, \mathrm{b}) \mapsto \Int{f \circ \oct (G_{\rho, \mathrm{b}} : \strain)}{y, Y}
\end{equation*}
be the operator that maps macroscopic strain, scaffold and bone density to the corresponding homogenized stimulus, where $G_{\rho, \mathrm{b}}$ denotes the corrector tensor from \Cref{eq:corrector_tensor}. Here, $f$ denotes a cell-specific activation function, which we made concrete in \Cref{tab:functional_rel}.

We start by determining the derivative to the octahedral shear strain 
\begin{align}
    \oct(\strain) &= \frac{2}{3 a} \sqrt{3 \Tr \strain^2 - \Tr^2 \strain}\notag
\intertext{which is given as}
    \GD \oct(\strain_0) \delta\strain &= \frac4{3 a^2} \frac1{\oct(\strain_0)} \left( \Tr \strain_0 \delta\strain - \frac13 \Tr \strain_0 \Tr \delta\strain \right).
\end{align}

Thus, we can determine the derivative of $N$ with respect to the strain $\strain$ as
\begin{equation}
    \GD_\strain N(\strain_0) \delta\strain = \Int{f^\prime(\oct(G(y) : \strain_0)) \GD \oct(G(y) : \strain_0)(G(y) : \delta\strain)}{y,Y}.
\end{equation}
Calculating the derivatives with respect to the scaffold and bone densities is a little more challenging since we need to differentiate the microstructure with respect to the densities. In fact, these derivatives could be calculated by means of distributional derivatives and methods from calculus of moving surfaces, but this is computationally not tractable.
Here, we instead focus on approximating the microstructure derivatives by central differences.
That is, we have
\begin{equation}
    \GD_\rho N(\rho_0) \delta\rho = \Int{f^\prime(\oct(G(y) : \strain_0)) \GD \oct(G(y) : \strain_0) (\GD_\rho G_{\rho_0, \mathrm{b}_0} (y) \delta\rho : \strain_0)}{y, Y}
\end{equation}
and substitute the expression $\GD_\rho G_{\rho_0, \mathrm{b}_0} (y) \delta\rho$ with central differences as
\begin{equation*}
    \GD_\rho G_{\rho_0, \mathrm{b}_0} (y) \delta\rho \approx \frac{G_{\scaffold_0 + \frac{h}2, \bone_0} - G_{\scaffold_0 - \frac{h}2, \bone_0}}{h}.
\end{equation*}
For the bone derivative we proceed analogously.

Their adjoint actions are given as
\begin{align*}
    \GD^\ast_\strain N(\strain_0) \colon \R^\ast &\to \left(\R^{3 \times 3} \right)^\ast, \\
    \delta y^\ast &\mapsto \delta y^\ast \circ \GD_\strain N(\strain_0) \\ &= \left( \delta\strain \mapsto \delta y^\ast(\GD_\strain N(\strain_0) \delta\strain) \right) \\
    &= (\delta\strain \mapsto \delta y^\ast(1) \GD_\strain N(\strain_0) \delta\strain ), \\
    \GD^\ast_\scaffold N(\scaffold_0) \colon \R^\ast &\to \R^\ast, \\
    \delta y^\ast & \mapsto (\delta\scaffold \mapsto \delta y^\ast(1) \GD_\scaffold N(\scaffold_0) \delta\scaffold), \\
    \GD^\ast_\bone N(\bone_0) \colon \R^\ast &\to \R^\ast, \\
    \delta y^\ast & \mapsto (\delta\bone \mapsto \delta y^\ast(1) \GD_\bone N(\scaffold_0) \delta\bone).
\end{align*}

For the class of objective functionals that can be considered within this framework, we refer to \cite{ScaffoldOpt}. However, in this work, we mainly focus on a mixture of the objectives to minimize \emph{compliance} and to maximize \emph{amount of regenerated bone at end time}. Considering only the amount of bone regenerated, the scaffolds seek a low density, making them difficult to manufacture. Therefore, the set of controls is restricted by a box constraint of $[c_P, C_P]$, for two fixed constants $c_P, C_P > 0$. For our simulations, these constants are set to $c_P = 0.1$ and $C_P = 0.99$.
Another approach to prevent scaffolds from becoming too porous is to also consider compliance in the objective such that the resulting structures admit appropriate stiffness. The final objective functional then reads as
\begin{equation}
    J(y, \rho) \coloneqq \gamma \max_{t \in [0, T]} \left( w(t) \underbrace{\Int{\sigma(t, x) : \sym u(t, x)}{x, \Omega}}_{\text{compliance}} \right) - \eta \Int{\cost(T, x)}{x, \Omega}
    \label{eq:objective_functional}
\end{equation}
The factors $\gamma \geq 0$ and $\eta \geq 0$ denote the weighing of the respective objectives and $w(t)$ is to select specific time points. Here, we choose $w(\{0, T\}) = 1$ and $w(t) = 0$ otherwise. As mentioned in \cite{ScaffoldOpt}, the maximum is smoothly approximated by $L^p(I)$ norms for large values of $p$. 

\section{Simulations}
\label{sec:simulations}

In what follows, we will distinguish between the following three models \textsc{n} (none), \textsc{ed} (elasticity \& diffusion) and \textsc{eds} (elasticity, diffusion \& stimulus) which refer to the systems where no underlying homogenization is employed, where only the diffusion and linear elastic equations are homogenized, and the fully homogenized model, respectively. For each model, we want to examine how the cell densities evolve for a simulation time of \SI{140}{\day} and a time step of $\Delta t = \SI{1}{\day}$. The simulation time of \SI{140}{\day} is chosen so that the amount of regenerated bone slows down and stabilizes and a convergent trend for the osteoblasts is observed. Note that for all simulations that follow, we chose an initial scaffold porosity of \SI{79}{\percent}. In \Cref{fig:cell_evolution_opt} we will compare the cell evolution of the initial homogeneous scaffolding structure with the optimized scaffolds; see \Cref{sec:opt_scaffolds}. Since the three models gradually try to describe bone growth more accurately, they also strongly differ in their running times. For the mode \textsc{n} the forward model needs about \SI{10}{\minute} and for each optimization step the same amount of time is needed. For all three modes $3$ to $4$ optimization steps were necessary, thus leading to a total execution time of $\sim~\SI{30}{\minute}$ until the optimal scaffolding structure is found. The forward models for the modes \textsc{ed} and \textsc{eds} needed about \SI{20}{\min} and \SI{3.5}{\hour} leading to total execution times of $\sim\SI{1.5}{\hour}$ and $\sim\SI{15}{\hour}$, respectively.

\subsection{Cell Evolution}

In \Cref{fig:cell_evolution} the different cell evolutions for the modes \textsc{n}, \textsc{ed} and \textsc{eds} are plotted. Here we consider the volume fraction of the respective cells within the defect side. For all cells except the progenitor cells we can see an increase in the cell population using the fully homogenized model \textsc{eds}. In particular, fibroblasts and chondrocytes are only stimulated when considering the microscopic stimulus. Since we have an increase in these cell populations we have a decrease in the progenitor cells' population. For the osteoblasts the evolution admits a similar trend but with higher rates of regenerated bone and the convergence behavior is attained at similar time.
\begin{figure}[ht]
\centering
\begin{subfigure}{.4\textwidth}
    \centering
    \includegraphics[width=\textwidth]{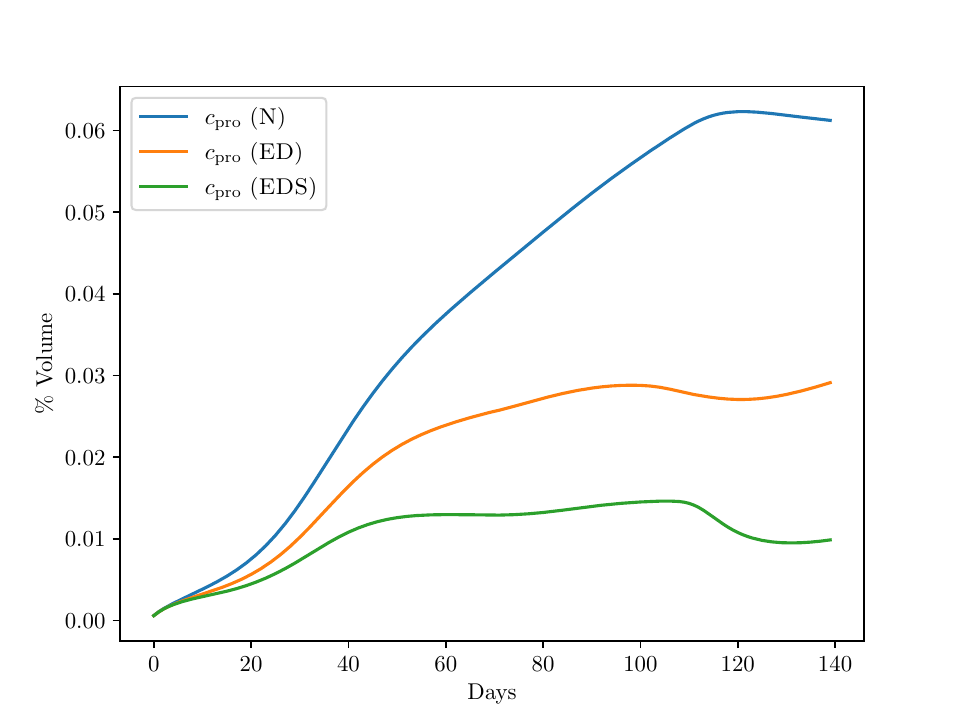}
    \caption{Progenitor cells}
\end{subfigure}%
\begin{subfigure}{.4\textwidth}
    \centering
    \includegraphics[width=\textwidth]{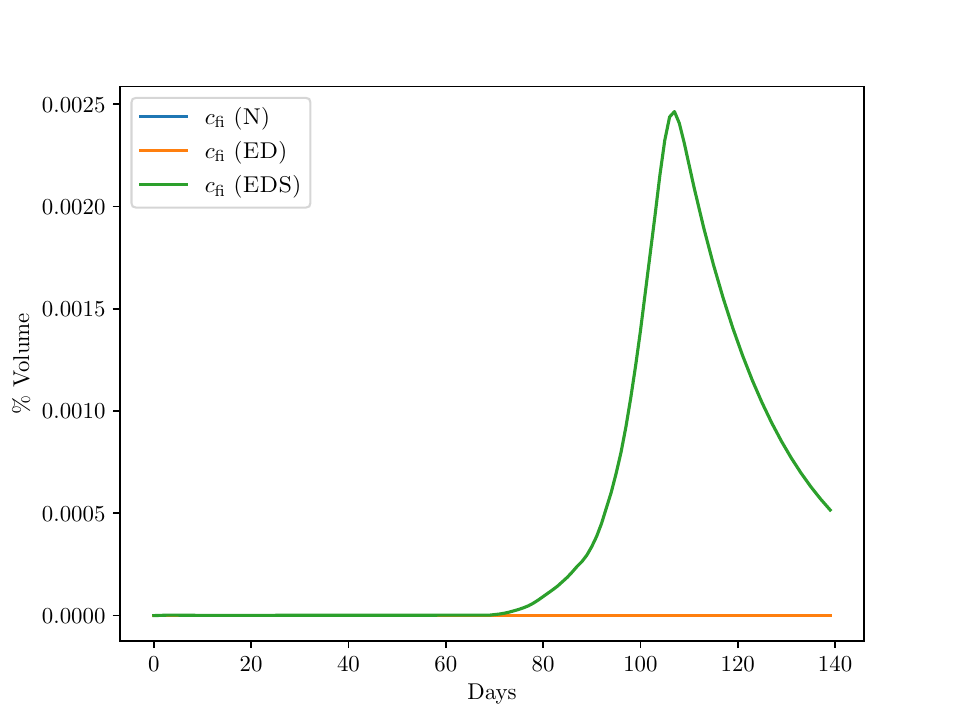}
    \caption{Fibroblasts}
\end{subfigure}
\begin{subfigure}{.4\textwidth}
    \centering
    \includegraphics[width=\textwidth]{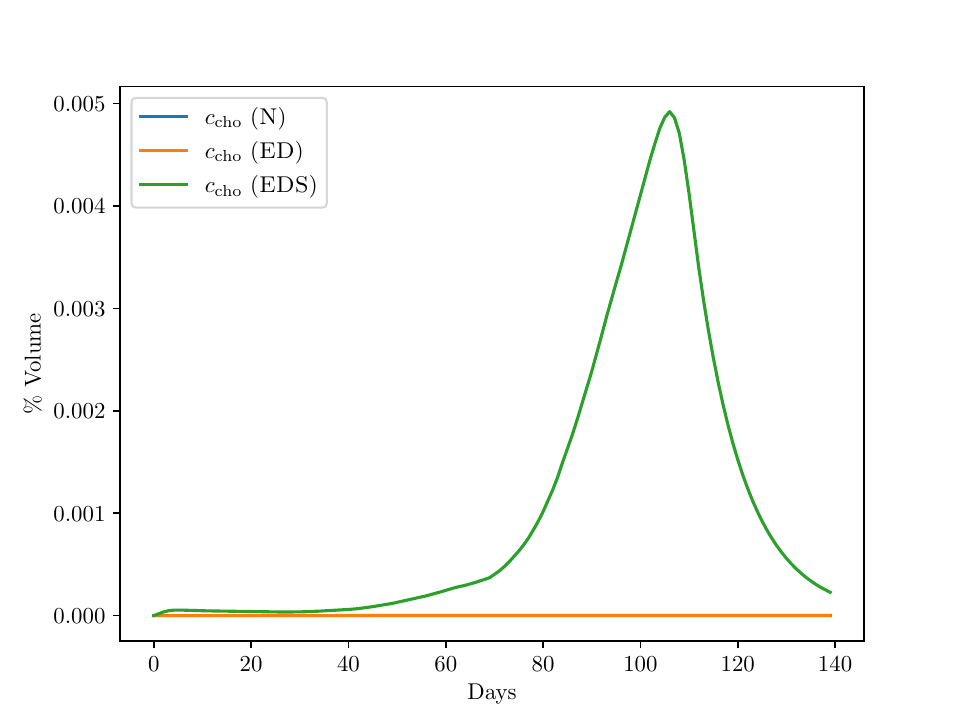}
    \caption{Chondrocytes}
\end{subfigure}%
\begin{subfigure}{.4\textwidth}
    \centering
    \includegraphics[width=\textwidth]{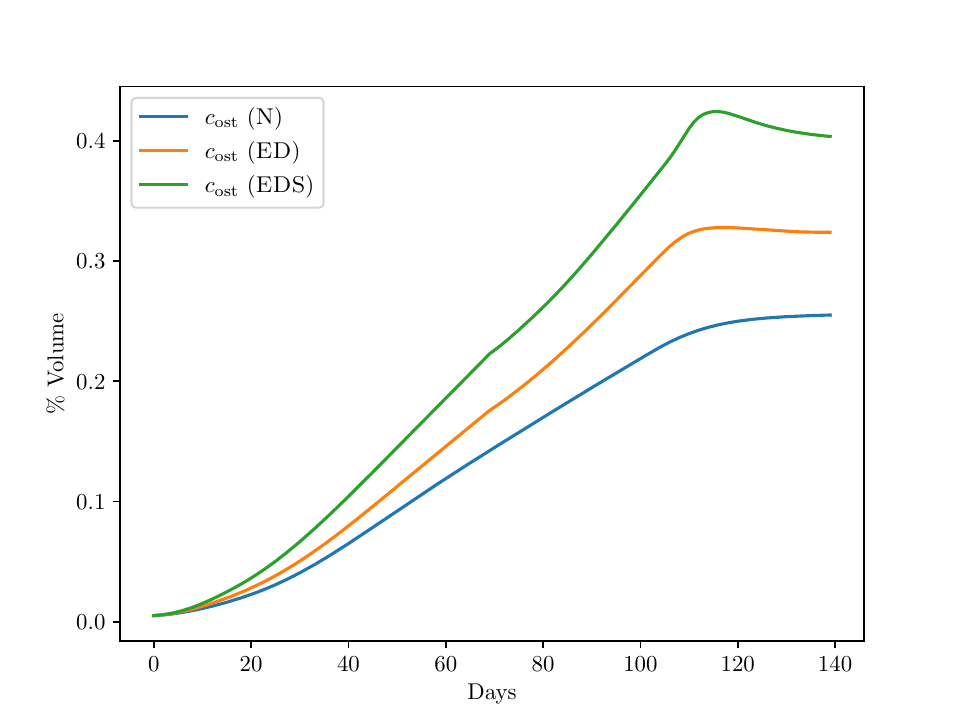}
    \caption{Osteoblasts}
\end{subfigure}
\caption{Comparison of the cell evolutions for the different modes \textsc{n}, \textsc{ed} and \textsc{eds} for a simulation time of \SI{140}{\day}.}
\label{fig:cell_evolution}
\end{figure}

\subsection{Optimized Scaffolding Structure}
\label{sec:opt_scaffolds}

We start by optimizing only the amount of bone regenerated at the end time, that is, we set $\gamma = 0$ and $\eta = 1$ in our objective functional \Cref{eq:objective_functional}. The results are depicted in \Cref{fig:optimized_scaffolds} for the modes \textsc{n}, \textsc{ed} and \textsc{eds}, where the fixator is located to the right hand side. All optimal scaffolds admit the same pattern to compensate for large stresses due to bending and compressive loads, where bone regeneration is much slower. At these points the scaffolding structure is denser to maintain an overall good mechanical performance. 

The scaffolding structures are reconstructed by converting the density volumes to the corresponding thickness values $\alpha$ (see \Cref{sec:micro_macro_coupling}) to obtain a spatially varying function $\alpha(x)$ throughout the defect volume. For this, we also need to specify the number of unit cells in the macroscopic space, where we chose $3$ unit cells per $\SI{4}{\mm}$.

\begin{figure}[ht]
    \centering
    \begin{subfigure}[t]{.3\textwidth}
        \centering
        \includegraphics[angle=90,width=0.7\textwidth]{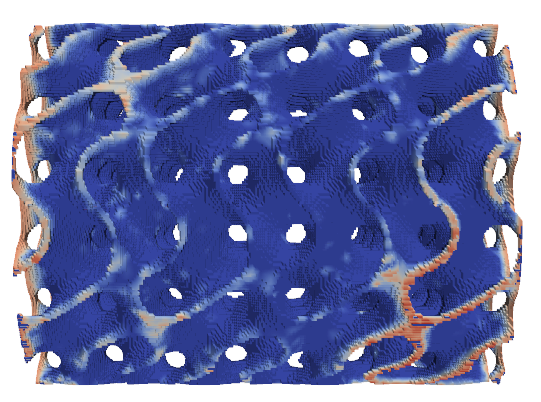}
        \caption{\textsc{n}}
    \end{subfigure}
    \hfill
    \begin{subfigure}[t]{.3\textwidth}
        \centering
        \includegraphics[angle=90,width=0.7\textwidth]{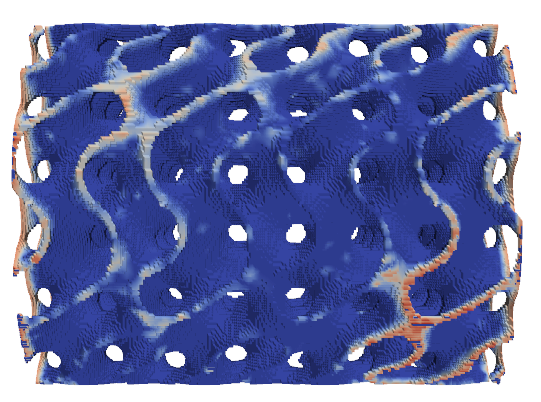}
        \caption{\textsc{ed}}
    \end{subfigure}
    \hfill
    \begin{subfigure}[t]{.3\textwidth}
        \centering
        \includegraphics[angle=90,width=0.7\textwidth]{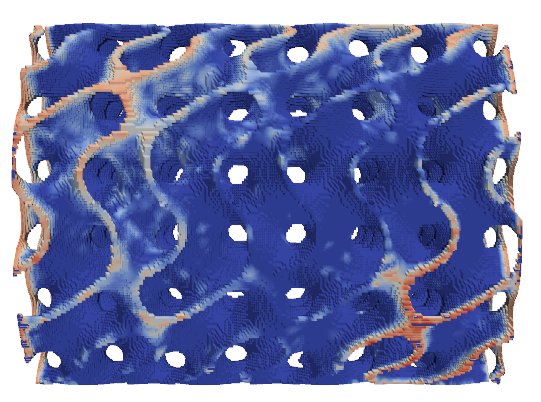}
        \caption{eds}
        \label{fig:optimized_scaffolds_eds}
    \end{subfigure}
    \caption{Optimized gyroid-based scaffolding structures for the different homogenized models \textsc{n}, \textsc{ed} and \textsc{eds} from left to right with the objective functional chosen to be the regenerated amount of bone at end time. The fixator is located to the right and the coloring denotes the density of the scaffolding structures.}
    \label{fig:optimized_scaffolds}
\end{figure}

Finally, we want to compare the cell evolutions for the initial homogeneous scaffold with a porosity of \SI{79}{\percent} with the optimized scaffold of \Cref{fig:optimized_scaffolds_eds}. Here we only employ the \textsc{eds} mode and the cell densities are depicted in \Cref{fig:cell_evolution_opt}. The maximal cell population for progenitor cells is almost doubled, leading to an increase in all cell densities. For the osteoblasts the trend is very similar until the point of convergence, where the regeneration process using the optimized scaffold surpasses that using a homogeneous scaffold.

\begin{figure}[ht]
\centering
\begin{subfigure}{.4\textwidth}
    \centering
    \includegraphics[width=\textwidth]{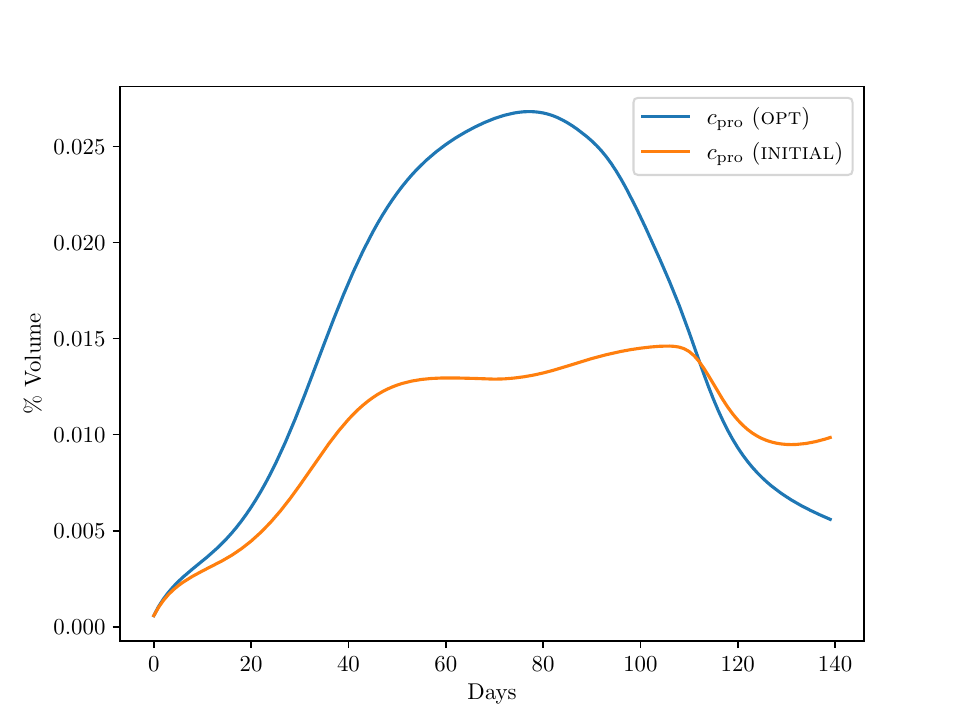}
    \caption{Progenitor cells}
\end{subfigure}%
\begin{subfigure}{.4\textwidth}
    \centering
    \includegraphics[width=\textwidth]{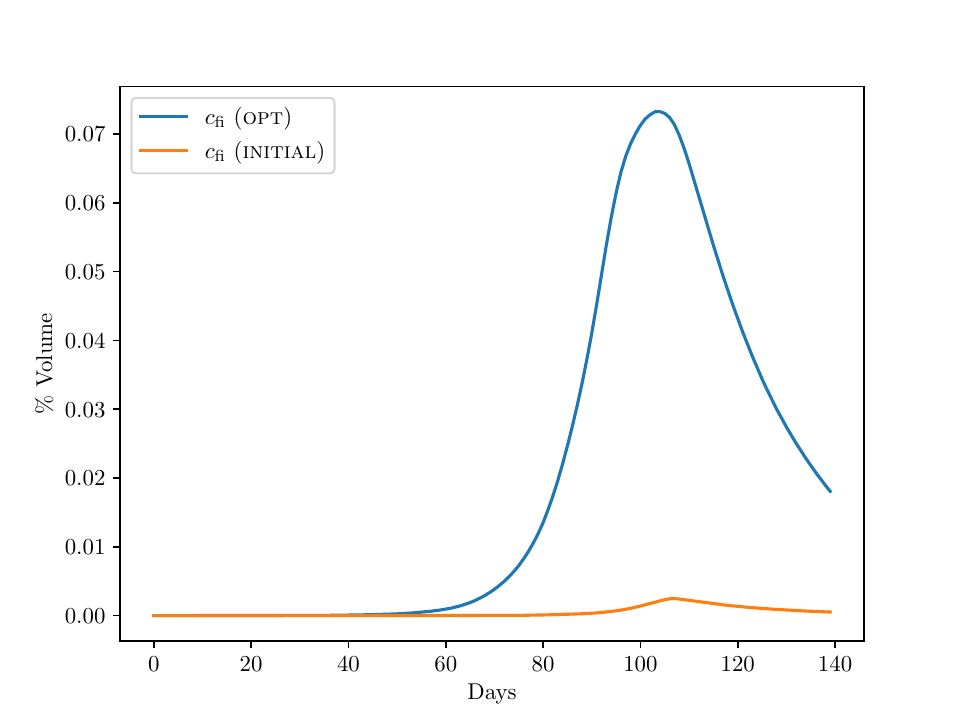}
    \caption{Fibroblasts}
\end{subfigure}
\begin{subfigure}{.4\textwidth}
    \centering
    \includegraphics[width=\textwidth]{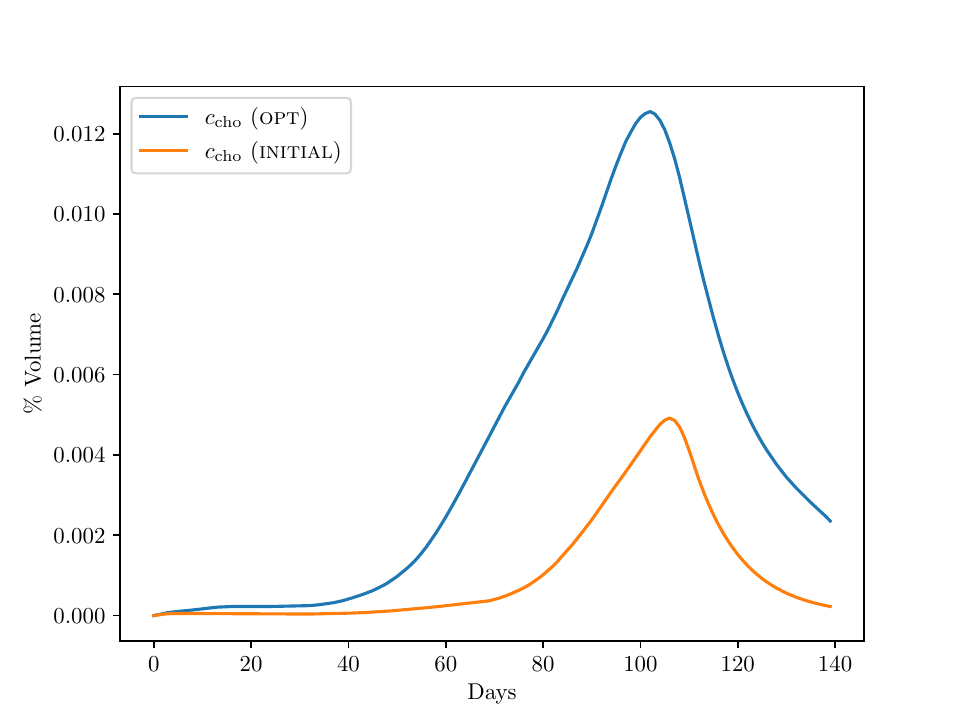}
    \caption{Chondrocytes}
\end{subfigure}%
\begin{subfigure}{.4\textwidth}
    \centering
    \includegraphics[width=\textwidth]{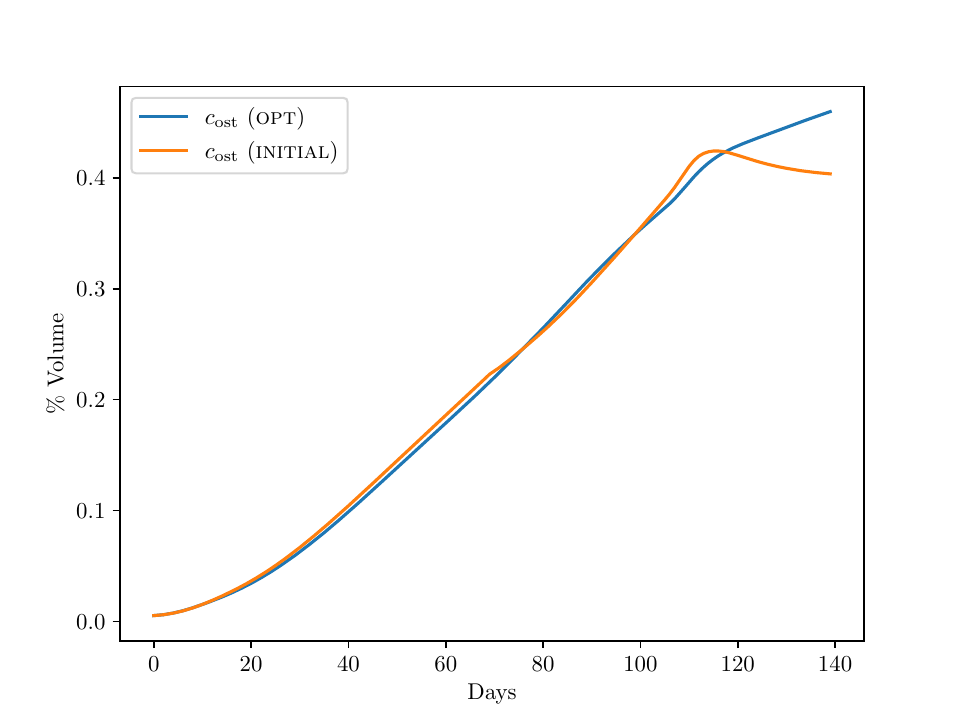}
    \caption{Osteoblasts}
\end{subfigure}
\caption{Comparison of the cell evolutions fo the \textsc{eds} mode of the homogeneous initial scaffold with a porosity of \SI{79}{\percent} (\textsc{initial}) with the optimized scaffold (\textsc{opt}) from above.}
\label{fig:cell_evolution_opt}
\end{figure}

In \Cref{fig:optimized_scaffolds_compliance} the optimized scaffolds for also including compliance (i.e.\ $\gamma, \eta =1$) in the objective functional are presented. Due to the box constraints in the optimization procedure, the difference between the optimized scaffolds with and without compliance minimization is marginal. One could also increase the weight for compliance minimization to make the differences more apparent.

\begin{figure}
    \centering
    \begin{subfigure}[t]{.3\textwidth}
        \centering
        \includegraphics[angle=90,width=0.7\textwidth]{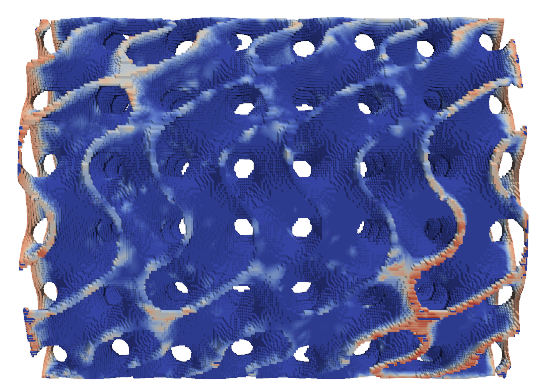}
        \caption{\textsc{n}}
    \end{subfigure}
    \hfill
    \begin{subfigure}[t]{.3\textwidth}
        \centering
        \includegraphics[angle=90,width=0.7\textwidth]{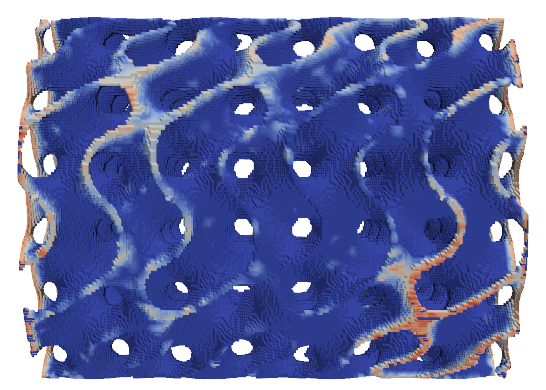}
        \caption{\textsc{ed}}
    \end{subfigure}
    \hfill
    \begin{subfigure}[t]{.3\textwidth}
        \centering
        \includegraphics[angle=90,width=0.7\textwidth]{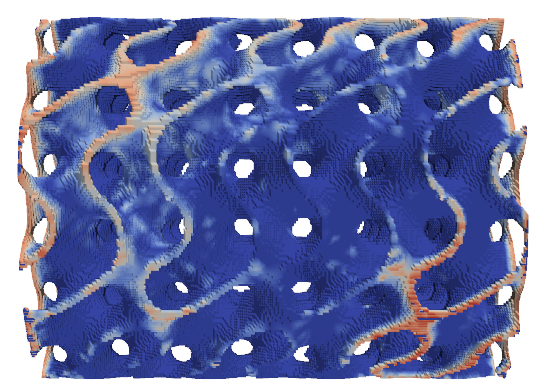}
        \caption{eds}
    \end{subfigure}
    \caption{Optimized gyroid-based scaffolding structures for the different homogenized models \textsc{n}, \textsc{ed} and \textsc{eds} from left to right with the objective functional chosen to be the regenerated amount of bone at end time and additionally including compliance to maintain mechanical integrity. The fixator is located to the right and the coloring denotes the density of the scaffolding structures.}
    \label{fig:optimized_scaffolds_compliance}
\end{figure}

In fact, if we insert the optimal scaffold calculated by means of the model \textsc{ed} into the forward model of \textsc{eds} the cell evolution behave the same as when using the optimal scaffold calculated by means of \textsc{eds}, indicating that this already gives a good candidate for the optimal structure with much shorter computation time. Therefore, we conclude that compensating for high stress areas is crucial for the optimization of the scaffold.

\Cref{fig:optimized_scaffolds_comparison} depicts two optimal scaffold designs for the Gyroid and strut-like based microgeometries computed within the fully homogenized model (\textsc{eds}). For both designs we see similar high stress areas compromising bone regeneration that need to be compensated by higher densities.
\begin{figure}[ht]
    \centering
    \begin{subfigure}[t]{.4\textwidth}
        \centering
        \includegraphics[angle=90,width=0.7\textwidth]{s_star_full_ss.png}
        \caption{Optimal Gyroid based scaffold}
    \end{subfigure}
    \hfill
    \begin{subfigure}[t]{.4\textwidth}
        \centering
        \includegraphics[angle=90,width=0.7\textwidth]{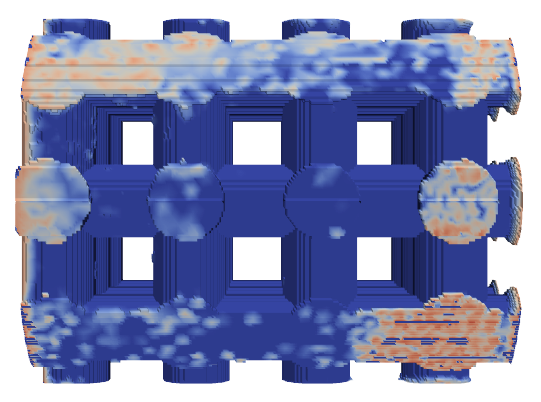}
        \caption{Optimal strut-like based scaffold}
    \end{subfigure}
    \caption{Optimized scaffolds for Gyroid and strut-like based microgeometries, calculated by means of the \textsc{eds} model.}
    \label{fig:optimized_scaffolds_comparison}
\end{figure}

We conclude this section by examining the differences in the cell evolutions for the Gyroid and strut-like micro geometries. These results are only compared for the models \textsc{ed} and \textsc{eds}, since for the model \textsc{n} the micro geometry has no impact on the regeneration process.
\begin{figure}[ht]
\centering
\begin{subfigure}{.4\textwidth}
    \centering
    \includegraphics[width=\textwidth]{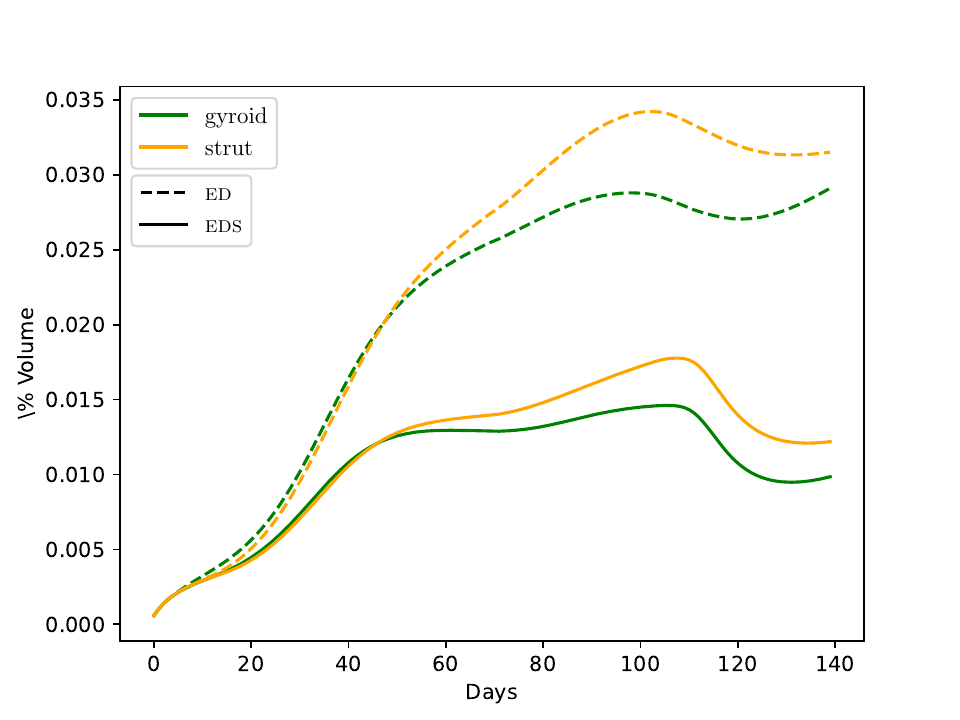}
    \caption{Progenitor cells}
\end{subfigure}%
\begin{subfigure}{.4\textwidth}
    \centering
    \includegraphics[width=\textwidth]{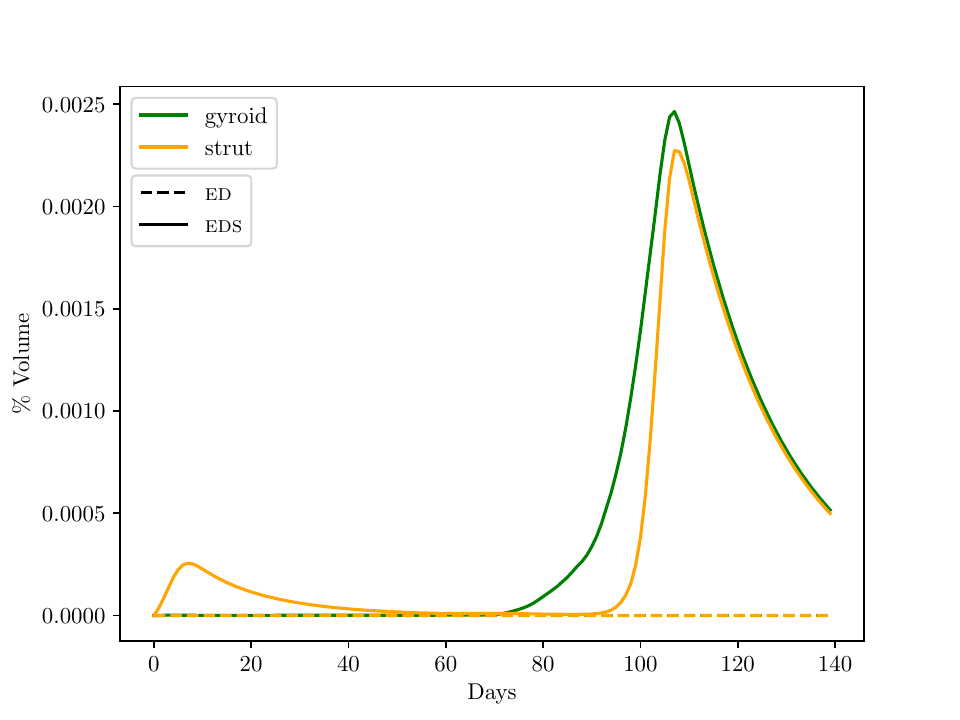}
    \caption{Fibroblasts}
\end{subfigure}
\begin{subfigure}{.4\textwidth}
    \centering
    \includegraphics[width=\textwidth]{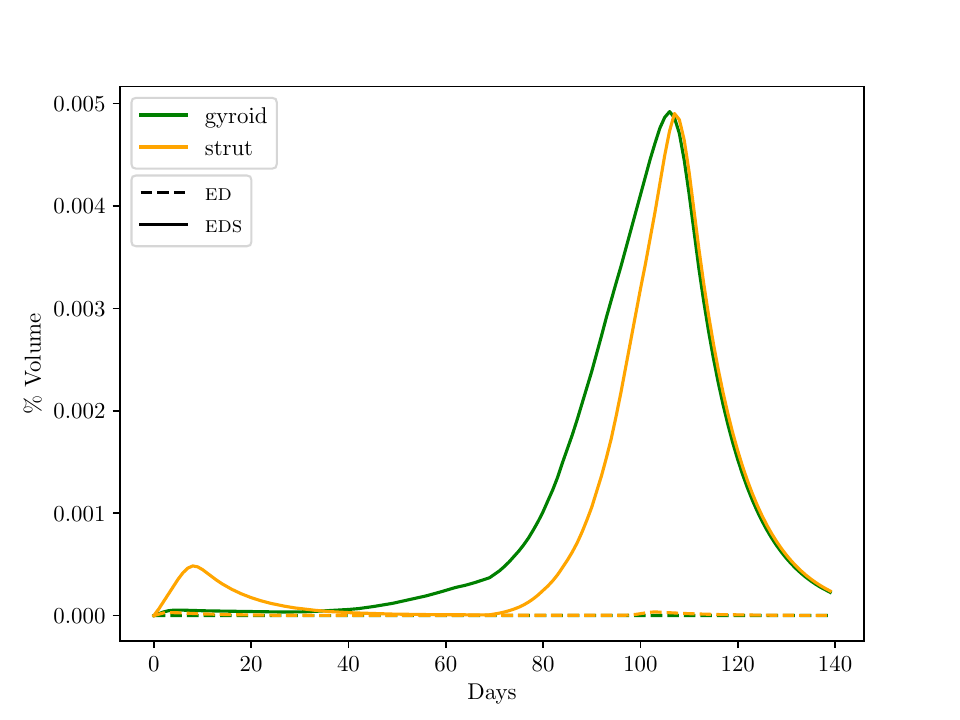}
    \caption{Chondrocytes}
\end{subfigure}%
\begin{subfigure}{.4\textwidth}
    \centering
    \includegraphics[width=\textwidth]{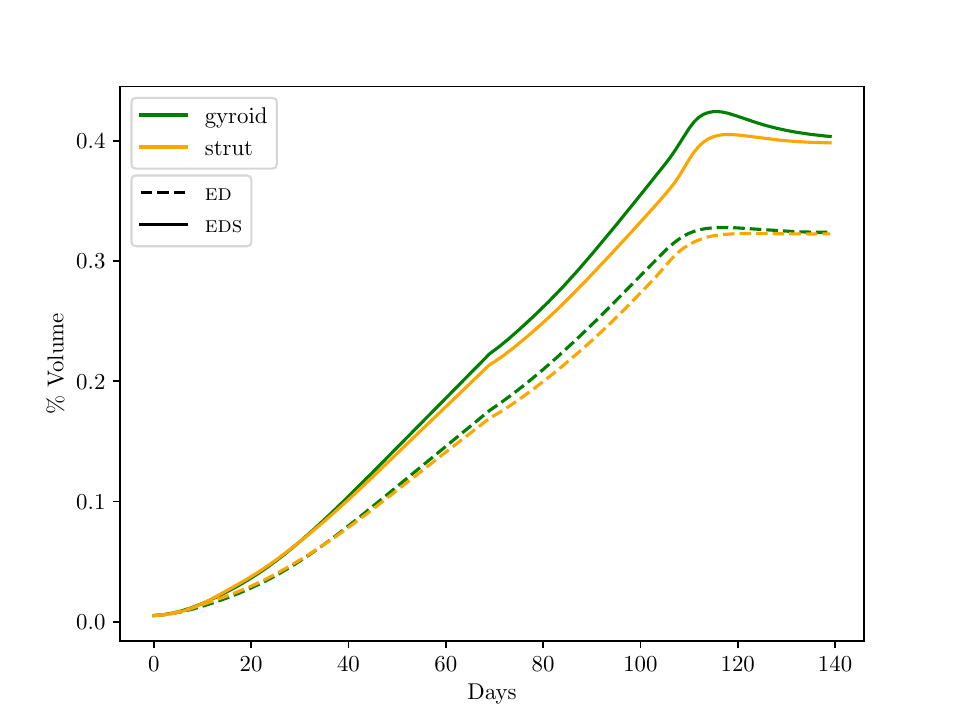}
    \caption{Osteoblasts}
\end{subfigure}
\caption{Comparison of the cell evolutions for the geometries Gyroid (green) and Strut (orange) for the modes \textsc{ed} (dashed) and \textsc{eds} (solid) for a simulation time of \SI{140}{\day}.}
\label{fig:cell_evolution_comparison}
\end{figure}

\section{Conclusion}

In this work we presented an extension to the model given in \cite{ScaffBone,ScaffoldOpt}. In contrast to the preceding work we explicitly considered the microstructural strain fluctations corresponding to a Gyroid and strut-like micro geometry by means of periodic homogenization. Even though we focused on the Gyroid and strut-like geometries the framework presented here permits easy integration of other geometries as well. Furthermore, we inferred that accounting for the microstructure is necessary since it strongly influences the regeneration process in the forward model. For the optimization of the scaffolds the stresses on the system had a stronger influence on the local volume fraction of the scaffold than the actual micro geometry. Therefore, the rather fast model \textsc{ED} gives a good basis for the ideal structure that stimulates bone growth the most, making it suitable for patient-specific therapy.

\section*{Acknowledgements}
The  authors  gratefully  acknowledge   support  from  BMBF within the e:Med program in the SyMBoD consortium (grant number 01ZX2210C).

\FloatBarrier

\printbibliography

\end{document}